\documentclass[11pt]{article}
\usepackage[english]{babel}
\usepackage{amssymb,amsmath,amsthm}
\textwidth=165truemm \textheight=240truemm \voffset-2.5truecm
\hoffset-2truecm \hfuzz17pt
\parindent=12pt

\newtheorem{theorem}{Theorem}[section]
\newtheorem{lemma}[theorem]{Lemma}

\newtheorem{corollary}[theorem]{Corollary}

\newtheorem{example}[theorem]{Example}

\newtheorem{conj}[theorem]{Conjecture}

{\theoremstyle{definition}}
{\theoremstyle{definition}\newtheorem{definition}[theorem]{Definition}}
{\theoremstyle{definition}}

\newtheorem*{thmgs}{Theorem GS}
\newtheorem*{corgs}{Corollary GS}
\newtheorem*{thmw}{Theorem W}

\numberwithin{equation}{section}

\def\N{{\mathbb N}}
\def\Z{{\mathbb Z}}

\def\K{{\mathbb K}}

\def\epsilon{\varepsilon}
\def\kappa{\varkappa}
\def\phi{\varphi}
\def\leq{\leqslant}
\def\geq{\geqslant}

\def\supp{\hbox{\tt supp}\,}
\def\dim{{\rm dim}\,}
\def\ssub#1#2{#1_{{}_{{\scriptstyle #2}}}}
\def\dimk{{\ssub{\dim}{\K}\,}}

\def\supp{\hbox{\tt supp}\,}

\title{Asymptotically optimal $k$-step nilpotency of quadratic algebras\\ and the Fibonacci
numbers}

\author{Natalia Iyudu and Stanislav Shkarin}

\date{}

\begin{document}

\maketitle

\begin{abstract}It follows from the Golod--Shafarevich theorem that
if $k\in\N$ and $R$ is an associative algebra given by $n$
generators and $d<\frac{n^2}{4}\cos^{-2}(\frac{\pi}{k+1})$ quadratic
relations, then $R$ is not $k$-step nilpotent. We show that the
above estimate is asymptotically optimal. Namely, for every
$k\in\N$, there is a sequence of algebras $R_n$ given by $n$
generators and $d_n$ quadratic relations such that $R_n$ is $k$-step
nilpotent and
$\lim\limits_{n\to\infty}\frac{d_n}{n^2}=\frac{1}{4}\cos^{-2}(\frac{\pi}{k+1})$.
\end{abstract}

\small \noindent{\bf MSC:} \ \ 17A45, 16A22

\noindent{\bf Keywords:} \ \ Quadratic algebras, Golod--Shafarevich
theorem, Anick's conjecture \normalsize

\section{Introduction \label{s1}}\rm

Throughout this paper $\K$ is an arbitrary field, $\Z_+$ is the
set of non-negative integers and $\N$ is the set of positive
integers. For a set $X$, $\K\langle X\rangle$ stands for the free
associative algebra over $\K$ generated by $X$. We deal with
{\it quadratic algebras}, that is, algebras $R$ given as
$\K\langle X\rangle/I$, where $I$ is the ideal in $\K\langle X\rangle$
generated by a collection of homogeneous elements (called relations)
of degree $2$.

Algebras of this class, their growth, their Hilbert series and
nil/nilpotency properties have been extensively studied, see
\cite{popo,smo,ufnar} and references therein.  One of the most
challenging questions in the area (see \cite{smo,zelm}) is the Kurosh problem
of whether there is an infinite dimensional nil algebra in this class.
A version of this question dealing with algebras of finite Gelfand--Kirillov
dimension was solved in \cite{lesmo}. The Golod--Shafarevich type lower estimates
for the dimensions of the graded components of an algebra play a crucial role
in the study of quadratic algebras. These estimates have many other applications, for instance, to $p$-groups and class field theory \cite{gosh,zelm1}.

Recall that a $\K$-algebra $R$ defined by the set $X$ of generators and a set of
homogeneous relations inherits the degree grading from the free algebra $\K\langle X\rangle$.
If $X$ is finite, one can consider the Hilbert series of $R$:
\begin{equation*}
H_R(t)=\sum_{q=0}^\infty (\dimk R_q)\,t^q,
\end{equation*}
where $R_q$ is the $q^{\rm th}$ homogeneous component of $R$.
The original Golod--Shafarevich theorem provides a lower estimate
for the coefficients of $H_R$. In the case of quadratic algebras the
theorem reads as follows \cite{gosh,popo}. For two power series $a(t)$ and
$b(t)$ with real coefficients we write $a(t)\geq b(t)$ if $a_j\geq b_j$
for any $j\in\Z_+$, while $|a(t)|$ stands for the power series obtained from $a(t)$ by
replacing by zeros all coefficients starting from the first
non-positive one.

\begin{thmgs}Let, $n\in\N$, $0\leq d\leq
n^2$ and $R$ be a quadratic $\K$-algebra with $n$ generators and $d$
relations. Then $H_R(t)\geq |(1-nt+dt^2)^{-1}|$.
\end{thmgs}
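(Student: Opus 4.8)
The plan is to isolate the single arithmetic inequality
\[
\dim R_q\ \geq\ n\dim R_{q-1}-d\dim R_{q-2}\qquad(q\geq 2)
\]
and then deduce the theorem from it by a formal power-series manipulation that exploits the truncation built into the operation $|\cdot|$. Throughout write $A=\K\langle X\rangle$, $X=\{x_1,\dots,x_n\}$, $V=A_1=\spann X$; let $W\subseteq A_2$ be the span of the relations (so $\dim W\leq d$), let $I$ be the two-sided ideal it generates, and put $J_q=\dim I_q$ and $b_q=\dim R_q=n^q-J_q$, so that $b_0=1$ and $b_1=n$.

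I would first establish the displayed recursion. The only structural fact needed is that in the free algebra all multiplication maps are injective --- indeed $A_a\otimes A_b=A_{a+b}$ --- so $\dim(UU')=\dim U\cdot\dim U'$ for graded subspaces $U,U'\subseteq A$. Since $I$ is generated in degree $2$, for every $q\geq 2$ one has $I_q=VI_{q-1}+WA_{q-2}$: the terms $V^{i}WV^{j}$ with $i+j=q-2$ and $i\geq 1$ lie in $VI_{q-1}$, while the $i=0$ term is $WA_{q-2}$. Hence the multiplication map $V\otimes I_{q-1}\ \oplus\ W\otimes A_{q-2}\longrightarrow I_q$ is onto, and since both summands embed into $A_q$ its kernel is canonically $VI_{q-1}\cap WA_{q-2}$; therefore
\[
J_q\ =\ n\,J_{q-1}+(\dim W)\,n^{q-2}-\dim\!\bigl(VI_{q-1}\cap WA_{q-2}\bigr).
\]
Now I would use the key containment $WI_{q-2}\subseteq VI_{q-1}\cap WA_{q-2}$: inclusion in $WA_{q-2}$ is clear, and for $w=\sum_i v_i\otimes v_i'\in W$ and $a\in I_{q-2}$ we have $wa=\sum_i v_i(v_i'a)$ with each $v_i'a\in VI_{q-2}\subseteq I_{q-1}$, so $wa\in VI_{q-1}$. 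As $\dim(WI_{q-2})=(\dim W)J_{q-2}$ by injectivity, this gives $J_q\leq nJ_{q-1}+(\dim W)(n^{q-2}-J_{q-2})$, equivalently $b_q\geq nb_{q-1}-(\dim W)\,b_{q-2}\geq nb_{q-1}-d\,b_{q-2}$ (using $\dim W\leq d$ and $b_{q-2}\geq0$).

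Secondly I would prove the soft power-series statement: \emph{if a nonnegative sequence $(b_q)_{q\geq0}$ with $b_0=1$, $b_1=n$ satisfies $b_q\geq nb_{q-1}-db_{q-2}$ for all $q\geq 2$, then $\sum_q b_qt^q\geq|(1-nt+dt^2)^{-1}|$.} Indeed, write $(1-nt+dt^2)^{-1}=\sum_q c_qt^q$ and let $j$ be the least index with $c_j\leq 0$ (so $c_0,\dots,c_{j-1}>0$; put $j=\infty$ if there is none). With $e_q:=b_q-nb_{q-1}+db_{q-2}$ one has $e_0=1$, $e_1=0$ and $e_q\geq0$ for $q\geq2$, and $\bigl(\sum_q b_qt^q\bigr)(1-nt+dt^2)=1+\sum_{q\geq2}e_qt^q$; multiplying by $\sum_q c_qt^q$ yields the convolution $b_q-c_q=\sum_{i=2}^{q}e_i\,c_{q-i}$. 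For $q\leq j-1$ every index $q-i$ occurring is $\leq q-2<j$, so $c_{q-i}>0$ and hence $b_q\geq c_q$; for $q\geq j$ the trivial bound $b_q\geq 0$ already matches the corresponding (zero) coefficient. Thus $H_R(t)=\sum_q b_qt^q\geq|(1-nt+dt^2)^{-1}|$, which is the claim.

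The whole argument is elementary once the recursion is in hand, so the point I expect to be the real crux is the lower bound $\dim(VI_{q-1}\cap WA_{q-2})\geq(\dim W)J_{q-2}$ --- exhibiting enough ``overlap'' between the two ways of building up $I_q$ (from degree $q-1$ on the left and from degree $2$). The subspace $WI_{q-2}$ does it precisely because freeness of $A$ keeps all the relevant products of full rank.
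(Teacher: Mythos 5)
Your proof is correct. Note that the paper does not prove Theorem~GS at all --- it is quoted as a known result with references to Golod--Shafarevich and Polishchuk--Positselski --- so there is nothing internal to compare against; your argument is essentially the standard one from those sources: the identity $I_q=VI_{q-1}+WA_{q-2}$, the overlap $WI_{q-2}\subseteq VI_{q-1}\cap WA_{q-2}$ whose dimension is computed exactly by freeness, the resulting recursion $b_q\geq nb_{q-1}-db_{q-2}$, and the formal positivity argument via the convolution $b_q-c_q=\sum_{i\geq 2}e_ic_{q-i}$, which correctly handles the truncation in the definition of $|\cdot|$.
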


In particular, Theorem~GS provides a lower estimate on the order of
nilpotency of $R$.

\begin{definition}\label{kstep} A graded algebra $R$ is called
{\it $k$-step nilpotent} if $R_k=\{0\}$.
\end{definition}

Analysing the series $K(t)=|(1-nt+dt^2)^{-1}|$ in a standard way, one can
easily see that it is a polynomial of degree $<k$ if and only if
\begin{equation}\label{phik}
\frac d{n^2}\geq \phi_k,\ \ \text{where}\ \
\phi_k=\frac{1}{4}\cos^{-2}\Bigl(\frac{\pi}{k+1}\Bigr).
\end{equation}

For the sake of convenience, we outline the argument. If $(1-nt+dt^2)^{-1}=\sum\limits_{m=0}^\infty c_mt^m$
(the Taylor series expansion), then $K(t)$ is not a polynomial of degree $<k$ precisely when $c_m>0$ for $0\leq m\leq k$. Next, if $x^2-nx+d=(x-a)(x-b)$ ($a$ and $b$ are complex numbers in general), then an easy computation yields that $c_m=(m+1)(n/2)^m$ if $a=b$ and $c_m=\frac{a^{m+1}-b^{m+1}}{a-b}$ otherwise for $m\in\Z_+$. It follows that $c_m>0$ for all $m\in\Z_+$ if $a$ and $b$ are real, which happens precisely when $d\leq\frac{n^2}{4}$. If $n^2\geq d>\frac{n^2}{4}$, then $a,b=\sqrt{d}e^{\pm i\alpha}$, where $\alpha=\arccos\frac{n}{\sqrt d}$. Hence $c_m=\frac{a^{m+1}-b^{m+1}}{a-b}=d^{m/2}\frac{\sin(m+1)\alpha}{\sin\alpha}$ for $m\in\Z_+$. Clearly $c_m$ for $0\leq m\leq k$ are positive precisely when $(k+1)\alpha<\pi$. After plugging in $\alpha=\arccos\frac{n}{\sqrt d}$, (\ref{phik}) follows.

Formula (\ref{kstep}) together with Theorem~GS and the obvious fact that
the sequence $\{\phi_k\}$ decreases and converges to $\frac14$
implies the following corollary, which can be found in \cite{popo}.

\begin{corgs}If $R$ is a quadratic $\K$-algebra given by $n$
generators and $d<\phi_k n^2$ relations, then $\dim R_k>0$, where
$\phi_k$ is defined in $(\ref{phik})$. That is, $R$ is not $k$-step
nilpotent. In particular, if $d\leq\frac{n^2}{4}$, then $\dim R_k>0$
for every $k\in\N$ and therefore $R$ is infinite dimensional.
\end{corgs}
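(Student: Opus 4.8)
The plan is to read off the conclusion directly from Theorem~GS together with the coefficient analysis carried out above. First I would fix $k\in\N$ and a quadratic $\K$-algebra $R$ with $n$ generators and $d<\phi_k n^2$ relations, and invoke Theorem~GS to obtain the coefficientwise inequality $H_R(t)\geq K(t)$, where $K(t)=|(1-nt+dt^2)^{-1}|$. Since $\dim R_k$ equals the coefficient of $t^k$ in $H_R(t)$, it suffices to prove that the coefficient of $t^k$ in $K(t)$ is strictly positive.

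Next I would feed the hypothesis, in the form $\frac{d}{n^2}<\phi_k$, into the equivalence $(\ref{phik})$: it tells us that $K(t)$ is \emph{not} a polynomial of degree $<k$. Writing $(1-nt+dt^2)^{-1}=\sum_{m\geq0}c_mt^m$ and recalling that $K$ is obtained from this Taylor series by replacing with zeros all coefficients from the first non-positive one onward, the failure of $K$ to be a polynomial of degree $<k$ is precisely the assertion that $c_0,\dots,c_k$ are all positive. In particular these coefficients survive the truncation, so the coefficient of $t^k$ in $K(t)$ equals $c_k>0$; hence $\dim R_k\geq c_k>0$, so $R_k\neq\{0\}$ and $R$ is not $k$-step nilpotent.

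For the final assertion I would use that, as noted just before the statement, the sequence $\{\phi_k\}$ decreases and converges to $\tfrac14$, so $\phi_k>\tfrac14$ for every $k\in\N$; consequently the hypothesis $d\leq n^2/4$ forces $d<\phi_k n^2$ for all $k$, and the previous paragraph then yields $\dim R_k>0$ for every $k$. An algebra all of whose homogeneous components are non-zero is infinite dimensional, which completes the proof.

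This argument is entirely formal once Theorem~GS and the computation behind $(\ref{phik})$ are available, so there is no real obstacle; the only point deserving a moment's care is that one must extract positivity of the specific coefficient $c_k$, rather than merely the a priori weaker statement that $K$ is not a polynomial of degree below $k$ — but the description of the truncation shows the two are equivalent here.
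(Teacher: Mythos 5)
Your argument is correct and is exactly the deduction the paper intends: it combines Theorem~GS with the equivalence established around $(\ref{phik})$ (that $K(t)$ fails to be a polynomial of degree $<k$ precisely when $c_0,\dots,c_k>0$) and the fact that $\phi_k>\frac14$ for all $k$. The care you take to extract positivity of the specific coefficient $c_k$, rather than only non-polynomiality of $K$, is the right touch and matches the paper's own coefficient analysis.
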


Asymptotic optimality of the last statement in Corollary~GS was
proved by Wisliceny \cite{wis}.

\begin{thmw}For every $n\in\N$, there exists a quadratic $\K$-algebra
$R$ given by $n$ generators and $d_n$ relations such that $R$ is
finite dimensional and
$\lim\limits_{n\to\infty}\frac{d_n}{n^2}=\frac14$.
\end{thmw}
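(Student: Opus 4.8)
The plan is to obtain Theorem~W as the boundary limit of nilpotent examples whose relation density can be driven down to the Golod--Shafarevich threshold $\frac14$. Observe first that finite dimensionality forces $d_n>\frac{n^2}{4}$: by Corollary~GS an algebra with $d\le\frac{n^2}{4}$ relations has $\dim R_k>0$ for every $k$. Hence any admissible sequence automatically satisfies $\frac{d_n}{n^2}>\frac14$, and the real content is to achieve the matching upper bound in the limit. Since $\phi_k$ decreases strictly to $\frac14$, no single value of $k$ can witness the limit $\frac14$, so the nilpotency index of the examples must be allowed to grow with $n$; this is the point a fixed-$k$ construction alone misses.

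First I would construct, for each fixed $k\ge2$, a family of $k$-step nilpotent quadratic algebras with $n$ generators and $d=d^{(k)}_n$ relations whose density tends to $\phi_k$. Fix $V=R_1$ with $\dim V=n$ and take a generic relation space $W\subseteq V\otimes V$ with $\dim W=d=\lceil\phi_k n^2\rceil$. Now $R$ is $k$-step nilpotent exactly when the degree-$k$ part of the ideal fills $V^{\otimes k}$, that is, when the multiplication map $\bigoplus_{i=0}^{k-2}V^{\otimes i}\otimes W\otimes V^{\otimes(k-2-i)}\to V^{\otimes k}$ is surjective; this is a Zariski-open condition on $W$, so it holds for generic $W$ as soon as it holds for a single witness of the prescribed dimension. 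Equivalently one wants the algebra to attain the bound $H_R(t)=|(1-nt+dt^2)^{-1}|$ in degrees $\le k$: the computation recalled in the excerpt gives $c_k=d^{k/2}\sin((k+1)\alpha)/\sin\alpha$, which becomes non-positive (while $c_0,\dots,c_{k-1}$ stay positive) precisely as $d$ crosses $\phi_k n^2$, so the least density realising $R_k=\{0\}$ is $\phi_k$. Exhibiting a concrete witness whose surviving monomials are counted by the recurrence $c_m=nc_{m-1}-dc_{m-2}$, with characteristic polynomial $x^2-nx+d$ producing the Chebyshev/Fibonacci values responsible for $\phi_k$, is the substantive construction step.

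Given these families, Theorem~W follows by a diagonal argument, which is the decisive move. For each $k\ge2$ choose thresholds $N_2<N_3<\cdots$ so that $d^{(k)}_n/n^2<\phi_k+\frac1k$ for all $n\ge N_k$, and for $N_k\le n<N_{k+1}$ put $R_n:=R^{(k)}_n$, $d_n:=d^{(k)}_n$. Each $R_n$ is $k$-step nilpotent, hence finite dimensional, and as $n\to\infty$ the index $k=k(n)\to\infty$. The Corollary~GS lower bound together with the chosen upper bound gives $\frac14<\frac{d_n}{n^2}<\phi_{k(n)}+\frac1{k(n)}$, and since $\phi_k\to\frac14$ the right-hand side tends to $\frac14$. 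Therefore $\lim_{n\to\infty}d_n/n^2=\frac14$, which is exactly the asserted statement.

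I expect the main obstacle to be the genericity step of the second paragraph: proving that the generic (or an explicit) relation space of dimension $\lceil\phi_k n^2\rceil$ really makes $R_k$ vanish, i.e.\ that the Golod--Shafarevich inequality is sharp in degree~$k$. This is an instance of Anick's conjecture; although for each fixed $k$ it reduces to the maximality of the rank of a single explicitly describable linear map, controlling that rank uniformly as $n\to\infty$ and matching the cutoff to the Chebyshev threshold $\phi_k$ is where the combinatorial heart of the argument lies.
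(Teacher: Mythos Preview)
Your outer diagonal scheme is sound: given, for each $k$, a sequence of $k$-step nilpotent algebras with $d^{(k)}_n/n^2\to\phi_k$, the diagonalisation you describe does yield Theorem~W. The problem is the inner step. You propose to take a \emph{generic} relation space of dimension $\lceil\phi_k n^2\rceil$ and argue that $R_k=0$; as you yourself note, this is exactly (a degree-$k$ case of) Anick's conjecture, which the paper explicitly records as open. Saying that ``it reduces to the maximality of the rank of a single explicitly describable linear map'' does not close the gap: one still has to exhibit a single $W$ for which that map is surjective, and you do not. So as written the proposal rests on an unproved statement and is not a proof.

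The paper does not prove Theorem~W via your route at all. It attributes Theorem~W to Wisliceny, who gave an explicit finite-dimensional quadratic algebra on $n$ generators with $d_n=\bigl\lceil\frac{n^2+2n}{4}\bigr\rceil$ \emph{semigroup} relations; here $d_n/n^2\to\frac14$ directly, with no diagonalisation and no genericity. Separately, the paper's own Theorem~1.2 does supply the fixed-$k$ input your diagonal argument needs, but by a completely different mechanism: one partitions the generator set into blocks $A_1,\dots,A_{k-1}$, equips the set $M$ of ``downward'' quadratic monomials with the block partial order, covers $M$ by chains using Dilworth's theorem, and takes each relation to be the sum of the monomials along a chain. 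Theorem~1.7 then forces $(k)$-step nilpotency, and Lemma~2.1 identifies the number of chains needed as $\max_j(a_1{+}\cdots{+}a_j)(a_j{+}\cdots{+}a_{k-1})$, whose minimum over partitions is $d_{n,k}\sim\phi_k n^2$. This is a constructive, combinatorial substitute for your genericity step; it does not hit $\lceil\phi_k n^2\rceil$ on the nose (that would again be Anick), but it gives the asymptotic density you need. If you want to salvage your diagonal argument, replace the generic-$W$ paragraph by this chain-cover construction.
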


More specifically, Wisliceny has constructed a quadratic algebra
given by $n$ generators and $\bigl\lceil
\frac{n^2+2n}{4}\bigr\rceil$ semigroup relations (that is, every
relation is either a degree 2 monomial or a difference of two degree
2 monomials), which is finite dimensional. Note that here and everywhere
below $\lfloor t\rfloor$ is the largest integer $\leq t$,
while $\lceil t\rceil$ is the smallest integer $\geq t$, where $t$
is a real number. The authors \cite{ns}
have improved the last result by showing that the minimal number of
semigroup quadratic relations needed for finite dimensionality of an
algebra with $n$ generators is exactly $\bigl\lceil
\frac{n^2+n}{4}\bigr\rceil$. The number $\bigl\lceil
\frac{n^2+1}{4}\bigr\rceil$ remains a conjectural answer to the same
question in the class of general quadratic (not necessarily
semigroup) algebras.

\subsection{Results}

Note that if $R$ is $k$-step nilpotent, then $R_m=\{0\}$ for $m\geq
k$ and therefore $R$ is finite dimensional provided $|X|<\infty$,
where $X$ is the set of generators of $R$.
Thus $R$ is $k$-step nilpotent if and only if $H_R$ is a polynomial
of degree $<k$.

In this article we show that the first statement in Corollary~GS is
asymptotically optimal for every $k\geq 2$. In order to formulate
the exact statement, we shall introduce the following numbers. For
$n\in\N$ and $k\geq 2$ let
\begin{equation}\label{dnk}
d_{n,k}=\min_{n=a_1+{\dots}+a_{k-1}}\ \max_{1\leq j\leq
k-1}(a_1+{\dots}+a_j)(a_j+{\dots}+a_{k-1}),
\end{equation}
where $a_j$ are assumed to be non-negative integers. It turns out
that the integers $d_{n,k}$ are not too far from $\phi_k n^2$.

\begin{lemma}\label{dnph} For each $n,k\in\N$ with $k\geq 2$,
\begin{equation}\label{estim}
\textstyle
\phi_k n^2\leq d_{n,k}\leq \phi_k n^2+\frac{(1+\phi_k)n}{2}+\frac14.
\end{equation}
In particular, $\lim\limits_{n\to\infty}\frac{d_{n,k}}{\phi_k n^2}=1$ for each
$k\geq 2$.
\end{lemma}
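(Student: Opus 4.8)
The plan is to analyze the minimax expression $(\ref{dnk})$ by connecting the optimal choice of the $a_j$ to the trigonometric identities that already drive the formula $(\ref{phik})$. The key observation is that the quantities $(a_1+\dots+a_j)$ appearing in $(\ref{dnk})$ are partial sums of a composition of $n$ into $k-1$ parts; writing $s_j=a_1+\dots+a_j$ with $s_0=0$ and $s_{k-1}=n$, the $j$-th term of the max becomes $s_j(n-s_{j-1})$. So $d_{n,k}$ is the minimum over increasing integer sequences $0=s_0\le s_1\le\dots\le s_{k-1}=n$ of $\max_{1\le j\le k-1}s_j(n-s_{j-1})$. For the lower bound I would first pass to the real relaxation (allow the $s_j$ to be real), show that at the optimum all $k-1$ products $s_j(n-s_{j-1})$ are equal to a common value $D$, and solve the resulting recursion $s_j=D/(n-s_{j-1})$. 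This is a continued-fraction / linear-fractional recursion; diagonalizing the associated $2\times2$ matrix $\left(\begin{smallmatrix} 0 & D\\ -1 & n\end{smallmatrix}\right)$, the boundary conditions $s_0=0$, $s_{k-1}=n$ force its eigenvalue angle to be $\pi/k$ (compare the $c_m=d^{m/2}\sin((m+1)\alpha)/\sin\alpha$ computation in the excerpt), which yields exactly $D=\phi_k n^2$. Since the integer problem has value $\ge$ the relaxed problem, this gives $d_{n,k}\ge\phi_k n^2$.

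For the upper bound I would exhibit an explicit near-optimal integer composition. Take the real optimizer $s_j^*$ from the previous paragraph — explicitly $s_j^*=n\,\sin(j\pi/k)/\sin((j{-}1)\pi/k)\cdot(\text{telescoping})$, i.e. $s_j^*/n$ is a ratio of sines — and round: set $s_j=\lfloor s_j^*\rfloor$ or $\lceil s_j^*\rceil$ chosen so the sequence stays nondecreasing with $s_0=0$, $s_{k-1}=n$. Then each product satisfies $s_j(n-s_{j-1})\le (s_j^*+1)(n-s_{j-1}^*+1)=s_j^*(n-s_{j-1}^*)+ (n-s_{j-1}^*)+s_j^*+1\le \phi_k n^2 + (n-s_{j-1}^*)+s_j^*+1$. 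Bounding the linear error terms crudely by $n+ \phi_k n + 1$ (using $s_j^*\le \phi_k n$-type estimates that fall out of the sine-ratio formula, since consecutive partial products are $\phi_k n^2$ and $s_{j-1}^*\ge 0$) gives the claimed $\phi_k n^2+\tfrac{(1+\phi_k)n}{2}+\tfrac14$ after a slightly more careful bookkeeping of which of the two terms $s_j^*$, $n-s_{j-1}^*$ is the smaller. The $\tfrac14$ and the exact coefficient $\tfrac{1+\phi_k}{2}$ suggest that one should symmetrize: bound $(s_j^*+\theta)(n-s_{j-1}^*+\theta')$ with $\theta+\theta'\le 1$, use AM–GM on the cross terms, and note $x(n-y)\le \phi_k n^2$ at the optimum together with $x+(n-y)\le n+\phi_k n$ type inequalities — I will defer the precise constant-chasing to the written-out proof.

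The main obstacle is the lower bound, specifically justifying that the real relaxation of $(\ref{dnk})$ is attained when all $k-1$ products coincide. Intuitively, if some product is strictly smaller than the max, one can perturb the corresponding $s_j$ to decrease the max; but one must check this perturbation argument handles the boundary and monotonicity constraints $0=s_0\le\dots\le s_{k-1}=n$ simultaneously (a perturbation of $s_j$ affects both the $j$-th and $(j{+}1)$-st products), so a clean way is probably to argue by a compactness/continuity argument that a minimizer exists, then show any non-equalized minimizer admits a local improvement, using that the function $D\mapsto$ (terminal value of the recursion $s_j=D/(n-s_{j-1})$) is strictly monotone in $D$ on the relevant range. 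Once the equalized continued-fraction value is identified with $\phi_k n^2$ via the eigenvalue-angle computation, the rest is bookkeeping, and the final limit statement $\lim_{n\to\infty} d_{n,k}/(\phi_k n^2)=1$ is immediate from $(\ref{estim})$ by squeezing.
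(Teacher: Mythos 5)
Your overall strategy is sound and the upper bound is essentially the paper's: the authors also take the ``equalizing'' real solution $\alpha_j$ of $\alpha_j(1-\alpha_{j-1})=\phi_k$, $\alpha_0=0$, $\alpha_{k-1}=1$ (their Lemma~\ref{alpk}), round to the nearest integer via $b_j=\lceil n\alpha_j-\frac12\rceil$, and control the error using exactly the symmetrization you anticipate, with the identity $\max_j(\alpha_j-\alpha_{j-1})=\phi_k$ supplying the coefficient $\frac{1+\phi_k}{2}$. (Note the estimate you need there is on the \emph{increment} $s_j^*-s_{j-1}^*\leq\phi_k n$, not on $s_j^*$ itself, which tends to $n$.) The genuine divergence is in the lower bound: the paper does not analyze the minimax at all, but simply observes that Theorem~\ref{asop} produces a $k$-step nilpotent algebra with $n$ generators and $d_{n,k}$ relations, so Corollary~GS forces $d_{n,k}\geq\phi_k n^2$ in one line. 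Your direct route through the real relaxation is legitimate and self-contained, but the equalization-by-perturbation argument you flag as the main obstacle can be bypassed: if $\max_j s_j(n-s_{j-1})=D$ then $\alpha_j\leq f_{D/n^2}(\alpha_{j-1})$ with $f_t(x)=t/(1-x)$ increasing, so $\alpha_{k-1}\leq f_{D/n^2}^{[k-1]}(0)$, and the paper's claim that $\phi_k$ is the smallest $t$ with $f_t^{[k-1]}(0)=1$ gives $D\geq\phi_k n^2$ directly; this monotone propagation is exactly the content of the paper's Lemma~\ref{alpk}, which your relaxation would in effect have to re-prove. One concrete slip to fix: the eigenvalue angle of $\left(\begin{smallmatrix}0&D\\-1&n\end{smallmatrix}\right)$ at $D=\phi_k n^2$ is $\pi/(k+1)$, not $\pi/k$ (the boundary condition $s_{k-1}=n$ forces $\cos\theta=1/(2\sqrt{\phi_k})=\cos(\pi/(k+1))$); with $\pi/k$ you would land on $\phi_{k-1}$ rather than $\phi_k$.
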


We have defined the numbers $d_{n,k}$ since they feature in the
following theorem.

\begin{theorem}\label{asop}Let $k\geq2$.
Then for every $n\in\N$, there exists a quadratic $\K$-algebra $R$
given by $n$ generators and $d_{n,k}$ relations such that $R$ is
$k$-step nilpotent.
\end{theorem}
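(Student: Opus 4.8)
The plan is to construct $R$ explicitly as a \emph{monomial} quadratic algebra, i.e.\ one whose relations are a set $M$ of degree-$2$ monomials in the variables $x_1,\dots,x_n$. For such an algebra, $\dim R_k$ equals the number of length-$k$ words in $x_1,\dots,x_n$ that contain no element of $M$ as a (consecutive) subword, so $k$-step nilpotency means: every word of length $k$ contains a forbidden factor. The combinatorial heart of the construction is to exploit the minimizing decomposition in $(\ref{dnk})$: fix non-negative integers $a_1+\dots+a_{k-1}=n$ achieving the minimum defining $d_{n,k}$, and partition the generators into $k-1$ consecutive blocks $B_1,\dots,B_{k-1}$ with $|B_j|=a_j$. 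The idea is to \emph{forbid} a monomial $x_ix_j$ precisely when the block index of $i$ is $\geq$ the block index of $j$; equivalently, allow $x_ix_j$ only when $i$ lies in a strictly earlier block than $j$. Then any allowed word $x_{i_1}x_{i_2}\cdots x_{i_m}$ has strictly increasing block indices, so its length is at most $k-1$; hence $R_k=\{0\}$ and $R$ is $k$-step nilpotent.

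The next step is to count the forbidden monomials and check the total is exactly $d_{n,k}$. With the rule above, $x_ix_j$ is allowed iff $i\in B_p$, $j\in B_q$ with $p<q$, so the number of \emph{allowed} monomials is $\sum_{p<q}a_pa_q$, and the number of forbidden ones is $n^2-\sum_{p<q}a_pa_q=\sum_p a_p^2+\sum_{p>q}a_pa_q=\sum_{p\geq q}a_pa_q$. One then verifies the identity
\begin{equation*}
\sum_{p\geq q}a_pa_q=\max_{1\leq j\leq k-1}(a_1+\dots+a_j)(a_j+\dots+a_{k-1})
\end{equation*}
\emph{at the minimizing decomposition}: the quantity $(a_1+\dots+a_j)(a_j+\dots+a_{k-1})$ counts products $a_pa_q$ with $p\leq j\leq q$, so it overcounts $\sum_{p\geq q}a_pa_q$ unless the two sums coincide. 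The point is that one may choose the minimizer so that the index $j$ at which the max is attained also makes $(a_1+\dots+a_j)(a_j+\dots+a_{k-1})$ equal to $\sum_{p\geq q}a_pa_q$; more robustly, one checks that $\min$ over decompositions of $\sum_{p\geq q}a_pa_q$ equals $\min$ over decompositions of the max expression, since $\sum_{p\geq q}a_pa_q\leq (a_1+\dots+a_j)(a_j+\dots+a_{k-1})$ for \emph{every} $j$ while equality holds for the $j$ realizing the minimum of the right-hand side (a short exchange/convexity argument on the $a_i$). This is the step I expect to be the main obstacle: reconciling the "weighted-by-position" max in $(\ref{dnk})$ with the plain count $\sum_{p\geq q}a_pa_q$ of forbidden monomials, and in particular showing the construction uses \emph{exactly} $d_{n,k}$ relations rather than merely $O(d_{n,k})$ of them, which may require a careful choice among optimal decompositions or a slightly refined forbidden set near the extremal block.

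Finally, having produced a $k$-step nilpotent algebra with exactly $d_{n,k}$ relations, one is done for every $n$; combined with Lemma~\ref{dnph} (which gives $d_{n,k}/n^2\to\phi_k$), this realizes the asymptotic optimality asserted in the abstract. I would also record that the construction is semigroup (indeed monomial), matching the flavour of Theorem~W and the authors' earlier work \cite{ns}, and note the boundary cases $k=2$ (forbid all $n^2$ monomials, $d_{n,2}=n^2=\phi_2n^2$) and small $n$ are handled directly by the same formula.
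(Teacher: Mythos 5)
The step you yourself flag as ``the main obstacle'' is not a technical wrinkle but a fatal error: the identity you need is false, and the inequality you assert goes the wrong way. With blocks of sizes $a_1,\dots,a_{k-1}$, your forbidden set is exactly the set $M$ of $(\ref{M})$ and has cardinality $\sum_{p\geq q}a_pa_q$, whereas $(a_1+\dots+a_j)(a_j+\dots+a_{k-1})$ counts only those forbidden pairs $(p,q)$, $p\geq q$, which straddle the index $j$, i.e.\ satisfy $p\geq j\geq q$. Hence $(a_1+\dots+a_j)(a_j+\dots+a_{k-1})\leq \sum_{p\geq q}a_pa_q$ for \emph{every} $j$ (not $\geq$, as you claim), and the inequality is strict whenever $k\geq 3$ and $a_1,a_{k-1}>0$, since no single $j$ straddles both $(1,1)$ and $(k-1,k-1)$. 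Concretely, for $n=8$, $k=4$ and the decomposition $(3,2,3)$ one has $d_{8,4}=\max\{24,25,24\}=25$, while your monomial algebra needs $|M|=43$ relations; no other decomposition brings the monomial count $\frac12\bigl(n^2+\sum a_p^2\bigr)$ below $43$. Since Corollary~GS shows $25$ cannot be improved, your construction does prove $k$-step nilpotency, but with roughly $n^2/2$ relations rather than the required $d_{n,k}\sim\phi_kn^2$; no choice among optimal decompositions or refinement of the forbidden set can rescue a construction whose relation count is forced to be $|M|$.

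The missing idea is the main point of the paper: the relations must be \emph{non-monomial}, each one a sum of several forbidden monomials, so that $|M|$ monomials are absorbed into only $\max_{1\leq j\leq k-1}(a_1+\dots+a_j)(a_j+\dots+a_{k-1})$ relations. The paper endows $M$ with the partial order $\prec$ of Definition~\ref{paror}, shows (Lemma~\ref{esti}) that the width of $(M,\prec)$ equals precisely this maximum, covers $M$ by that many chains via Dilworth's theorem, and takes as relations the chain sums $f_q=\sum_{(a,b)\in C_q}ab$. Theorem~\ref{chin} --- whose proof is the real work: an induction rewriting a putative nonzero degree-$(k+1)$ monomial modulo $I$ so that the block index of a distinguished letter strictly increases at each step --- then yields $k$-step nilpotency. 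Your nilpotency argument is the degenerate case of Theorem~\ref{chin} in which every chain is a singleton; the entire saving from $|M|$ down to $d_{n,k}$ comes from packing a whole chain of forbidden monomials into a single relation, which is exactly what a monomial (or semigroup) presentation cannot do.
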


Corollary~GS, Theorem~\ref{asop} and Lemma~\ref{dnph} imply that the
first statement in Corollary~GS is asymptotically optimal. Note that
Anick \cite{ani1,ani2} conjectured that for any $n\in\N$ and $0\leq
d\leq n^2$, there is a quadratic $\K$-algebra $R$ with $n$
generators and $d$ relations such that $H_R(t)=|(1-nt+dt^2)^{-1}|$.
The problem whether this conjecture is true remains open.
Theorem~\ref{asop} can be considered as an affirmative solution of
its natural asymptotic version. It is also worth noting that for
$k=2$, the statement of Theorem~\ref{asop} is trivial, while the
case $k=3$ was done by Anick \cite{ani1}. It is also worth
mentioning that the asymptotic optimality of the first statement in
Corollary~GS for $k=4$ and for $k=5$ in the case $|\K|=\infty$ was
earlier obtained by the authors \cite{na}
building upon the ideas set in \cite{cana} and using a completely
different approach. We refer to \cite{asyy} for a result on asymptotic
optimality of Theorem~GS in a completely different sense.

Curiously enough, for some pairs $(n,k)$ the estimate provided by
Theorem~\ref{asop} hits the mark. We illustrate this observation by
the following result dealing with the cases $k=4$ and $k=5$. Note
that $\phi_4=\frac{3-\sqrt 5}{2}$ and $\phi_5=\frac13$. Recall that
Fibonacci numbers are the members of the recurrent sequence defined
by $F_0=F_1=1$ and $F_n=F_{n-1}+F_{n-2}$ for $n\geq2$.

\begin{theorem}\label{k45} The equality
$d_{n,4}=\bigl\lceil\frac{3-\sqrt 5}{2}n^2\bigr\rceil$ holds if and
only if $n$ is a Fibonacci number. The equality
$d_{n,5}=\bigl\lceil\frac{n^2}{3}\bigr\rceil$ holds if and only if
$n\in\{1,2\}$ or $n$ is divisible by $6$.
\end{theorem}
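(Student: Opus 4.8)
The plan is to obtain closed formulas for $d_{n,4}$ and $d_{n,5}$ and then decide exactly when these equal $\lceil\phi_k n^2\rceil$, which by Lemma~\ref{dnph} is the smallest value they could possibly take. In each case the first step is to reduce the double extremum in $(\ref{dnk})$ to a one‑parameter problem: the objective is invariant under the reversal $(a_1,\dots,a_{k-1})\mapsto(a_{k-1},\dots,a_1)$, and an exchange argument (transferring mass within the mirror pairs $(a_j,a_{k-j})$) shows that an optimal partition can be taken as balanced as the parity of $n$ permits. For $k=4$ this forces $a_1=a_3$, and comparing the rounding of $\phi_4n$ up or down gives
\[
d_{n,4}=\min\bigl\{(n-p)^2,\ (p+1)n\bigr\},\qquad p=\lfloor\phi_4 n\rfloor,
\]
while for $k=5$ one gets $a_1=a_4$ and $|a_2-a_3|\leq1$, producing a similar but case‑split expression in $\lfloor n/3\rfloor$ and $\lceil n/3\rceil$; a little extra work is needed to confirm that genuinely unbalanced partitions never win and that the formulas are valid for small $n$ too.

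For $k=4$ I would then bring in the golden ratio $\tau=\tfrac{1+\sqrt5}{2}$ with conjugate $\sigma=1-\tau=-\tau^{-1}$, for which $\phi_4=\sigma^2=\tau^{-2}$, $1-\phi_4=\tau^{-1}$ and $\tau^{-1}+\tau^{-2}=1$. A direct computation yields the identities
\[
(n-p)^2-\phi_4 n^2=\{n\phi_4\}\bigl(\{n\phi_4\}+(\sqrt5-1)n\bigr),\qquad (p+1)n-\phi_4 n^2=n\bigl(1-\{n\phi_4\}\bigr),
\]
so that (since $\phi_4 n^2\notin\Z$) the equality $d_{n,4}=\lceil\phi_4 n^2\rceil$ holds exactly when the smaller of these two nonnegative numbers is $<1$. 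Using $n\phi_4+n/\tau=n$, hence $\{n\phi_4\}=1-\{n/\tau\}$, this says precisely that $n/\tau$ lies within $\tfrac1n$ above an integer, or within the (comparably small, explicitly computable) distance $\approx\tfrac1{(\sqrt5-1)n}$ below an integer --- an asymmetric, parity‑sensitive sharpening of ``$n/\tau$ is unusually close to an integer''. Binet's formula $F_m=(\tau^{m+1}-\sigma^{m+1})/\sqrt5$ gives $F_m/\tau=F_{m-1}-\sigma^{m+1}$, so $n/\tau$ is within $\tau^{-(m+1)}$ of $F_{m-1}$ on the side fixed by the parity of $m$; comparing with the two windows shows every Fibonacci number works. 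For the converse, write a non‑Fibonacci $n$ in Zeckendorf form $n=F_{m_1}+\dots+F_{m_r}$ with $r\geq2$ and $m_i-m_{i+1}\geq2$; then $n/\tau$ differs from the integer $F_{m_1-1}+\dots+F_{m_r-1}$ by exactly $-\sum_i\sigma^{m_i+1}$, and one estimates the magnitude and sign of this alternating sum and checks that it falls outside both windows.

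For $k=5$ the constant $\phi_5=\tfrac13$ is rational, so the analysis is purely arithmetic: from the formula for $d_{n,5}$ one reads off that $d_{n,5}=\lceil n^2/3\rceil$ forces the balanced shape $a_1=a_4$, $a_2=a_3$ with the parts in the right proportions, which by integrality occurs exactly when $6\mid n$, apart from a short list of small cases handled directly.

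The step I expect to be the main obstacle is the converse for $k=4$. The distance bound a non‑Fibonacci $n$ must \emph{violate} is of order $1/n$ with a constant exceeding the one furnished by the classical estimate for the most badly approximable $\tau$ (where $\liminf_n n\|n\tau\|=\tfrac1{\sqrt5}$, attained along Fibonacci); so a Hurwitz‑type inequality alone is not enough, and one must genuinely exploit that a non‑Fibonacci $n$ is ``doubly bad''. Concretely, both the size and the sign of the Zeckendorf error $\sum_i\sigma^{m_i+1}$ have to be controlled simultaneously and matched against the two asymmetric windows, and the delicate point is taming the tail $\sum_{i\geq2}\sigma^{m_i+1}$ --- whose terms grow like powers of $\tau$ relative to the leading one --- using only the non‑adjacency $m_i-m_{i+1}\geq2$. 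A secondary nuisance, easy but unavoidable, is verifying the closed formulas for $d_{n,4}$ and $d_{n,5}$ (in particular that asymmetric partitions never help) for the finitely many small $n$ where the generic estimates have not yet kicked in.
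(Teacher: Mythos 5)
Your reduction to a one-parameter problem, the resulting closed formula $d_{n,4}=\min\{(n-p)^2,(p+1)n\}$ with $p=\lfloor\phi_4 n\rfloor$, and the treatment of $k=5$ all match the paper's route (the paper writes the same minimum as $\min\bigl\{\lceil\tfrac{\sqrt5-1}{2}n\rceil^2,\,n\lceil\tfrac{3-\sqrt5}{2}n\rceil\bigr\}$ and, for $k=5$, derives the same balanced shapes $(\alpha,\beta,\beta,\alpha)$, resp.\ $(\alpha,\beta+1,\beta,\alpha)$, before a residue-mod-$6$ case check). Your two identities for $(n-p)^2-\phi_4n^2$ and $(p+1)n-\phi_4n^2$ are correct, and the ``Fibonacci $\Rightarrow$ equality'' direction via Binet and the parity of the index does close: for $n=F_m$ one gets $F_m\tau=F_{m+1}-\sigma^{m+1}$, and the relevant window is met with room to spare in each parity. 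Where you diverge from the paper is the converse, and this is also where your proposal is not actually complete: you plan to show directly, via Zeckendorf expansions, that a non-Fibonacci $n$ misses both windows, and you yourself flag the control of the alternating tail $\sum_{i\ge2}\sigma^{m_i+1}$ as the main unresolved obstacle. The paper instead reduces the equality to ``$\lceil\phi_4n^2\rceil$ is divisible by $n$ or is a perfect square'' and then invokes the Fibonacci-recognition criterion of M\"obius: $n$ is Fibonacci if and only if $(\tau n-n^{-1},\tau n+n^{-1})$ contains an integer.

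The useful observation you are missing is that your own computation already hands you this converse for free: your ``above'' window has width $n^{-1}$ and your ``below'' window has width at most $\bigl((\sqrt5-1)n\bigr)^{-1}<n^{-1}$, so both are contained in the M\"obius interval $(\tau n-n^{-1},\tau n+n^{-1})$. Hence if either window is hit, $n$ is a Fibonacci number by that criterion, and the hard Zeckendorf analysis is unnecessary. If you insist on a self-contained argument, what you are proposing amounts to reproving (a refinement of) the M\"obius criterion, and since the constant in your ``below'' window exceeds the Hurwitz constant $1/\sqrt5$, the sign and magnitude of the Zeckendorf error really do have to be tracked together; that is genuine work you have not carried out. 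As written, then, the proposal establishes the forward implication and the $k=5$ statement, but the $k=4$ converse stands or falls with a citation of (or a completed proof of) the M\"obius criterion.
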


Note that Theorem~\ref{k45}, Theorem~\ref{asop} and Corollary~GS
imply that if $k=4$ and $n$ is a Fibonacci number or if $k=5$ and
$6$ divides $n$, then the minimal number of quadratic relations
needed for the finite dimensionality of an algebra with $n$
generators is exactly $\lceil \phi_k n^2\rceil$. The proof of
Theorem~\ref{asop} is based upon the following general result. We
start by introducing some notation.

\begin{definition}\label{paror}Let $X$ be the union of pairwise
disjoint sets $A_1,\dots,A_k$ and
\begin{equation}\label{M}
M=M(A_1,\dots,A_k)=\bigcup_{1\leq j\leq q\leq n}\!\!A_q\times
A_j\subseteq X\times X.
\end{equation}
We introduce the following {\it partial ordering on $M$, generated
by the partition} $\{A_1,\dots,A_k\}$. Namely, for distinct elements
$(a,b)$ and $(c,d)$ of $M$, we write $(a,b)\prec(c,d)$ if $(a,b)\in
A_l\times A_j$ and $(c,d)\in A_m\times A_r$ with $m\geq r>l\geq j$.
\end{definition}

\begin{definition}\label{supp} For a homogeneous degree $2$
polynomial $g$ in the free algebra $\K\langle X\rangle$, the
$($uniquely determined$)$ finite subset $S$ of $X\times X$ such that
$g=\sum\limits_{(x,y)\in S} c_{x,y}xy$ with
$c_{x,y}\in\K\setminus\{0\}$ is called the {\it support} of $g$ and
is denoted $S=\supp(g)$.
\end{definition}

The next result is one of the main tools in the proof of
Theorem~\ref{asop}.

\begin{theorem}\label{chin} Let $k\in\N$, $\{A_1,\dots,A_k\}$ be
a partition of a set $X$ and $M$ be the set defined in $(\ref{M})$.
Assume also that $\{f_\alpha\}_{\alpha\in \Lambda}$ is a family of
homogeneous degree $2$ elements of the free algebra $\K\langle
X\rangle$ such that
$\bigcup\limits_{\alpha\in\Lambda}\supp(f_\alpha)=M$ and each
$\supp(f_\alpha)$ is a chain in $M$ with respect to the partial
ordering $\prec$ on $M$, generated by the partition
$\{A_1,\dots,A_k\}$ as in Definition~$\ref{paror}$. Then the algebra $R=\K\langle X\rangle/I$ with
$I={\tt Id}\{f_\alpha:\alpha\in\Lambda\}$ is $(k+1)$-step nilpotent.
\end{theorem}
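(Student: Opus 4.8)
The plan is to show that any product of $k+1$ generators vanishes in $R$. Concretely, I would set up a normal-form / reduction argument on the monomials of degree $k+1$ in $\K\langle X\rangle$, using the relations $f_\alpha$ to rewrite every such monomial as a $\K$-linear combination of monomials that are, in an appropriate sense, ``sorted with respect to the partition $\{A_1,\dots,A_k\}$,'' and then observe that no sorted monomial of degree $k+1$ can exist. Say a degree-$2$ monomial $xy$ is \emph{inverted} if $x\in A_q$, $y\in A_j$ with $q\ge j$, i.e.\ $(x,y)\in M$; the hypothesis $\bigcup_\alpha\supp(f_\alpha)=M$ guarantees that for every inverted pair $(x,y)$ there is a relation $f_\alpha$ whose support is a chain in $(M,\prec)$ containing $(x,y)$. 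Since $\supp(f_\alpha)$ is a $\prec$-chain, every other monomial $x'y'$ occurring in $f_\alpha$ satisfies $(x,y)\prec(x'y')$ or $(x'y')\prec(x,y)$; the key point is that we may use $f_\alpha$ to replace $xy$ by a combination of monomials $x'y'$ with $x'\in A_{l'}$, $y'\in A_{j'}$ where the pair $(l',j')$ is \emph{strictly lexicographically larger} (or smaller — one has to fix the direction) than $(q,j)$ in the partial order $\prec$. Thus each application of a relation strictly increases a discrete invariant attached to the offending adjacent pair, which forces termination.

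The main work is to make this into a genuine proof that \emph{all} degree-$(k+1)$ monomials die, not just that some rewriting terminates. Here is the cleaner route I would actually pursue. For a word $w=x_1x_2\cdots x_m$ with $x_i\in A_{c_i}$, call $w$ \emph{admissible} if the sequence of indices $c_1,c_2,\dots,c_m$ is strictly increasing, i.e.\ $c_1<c_2<\cdots<c_m$. Since there are only $k$ blocks $A_1,\dots,A_k$, an admissible word has length at most $k$; hence \emph{there are no admissible words of length $k+1$}. So it suffices to prove, by induction on a suitable well-founded order on words, that in $R$ every word of length $\ge 2$ is a $\K$-linear combination of admissible words (of the same length). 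For length $2$ this is exactly the hypothesis: if $x_1x_2$ is not admissible then $(x_1,x_2)\in M$, so some $f_\alpha$ with $x_1x_2\in\supp(f_\alpha)$ lets us write $x_1x_2\equiv \sum c_{x'y'}x'y'$ modulo $I$, and because $\supp(f_\alpha)$ is a $\prec$-chain one checks that among the resulting pairs $(x',y')$ with $(x',y')\prec (x_1,x_2)$ (using the block-index order) we eventually reach admissible (i.e.\ $\prec$-minimal within $M\cup(\text{admissible pairs})$) ones — a finite descent because the index set $\{1,\dots,k\}^2$ is finite. For length $m>2$ we work locally: given $w=u\,x_ix_{i+1}\,v$ with $x_ix_{i+1}$ not admissible, rewrite the length-$2$ factor, obtaining shorter-in-the-order words; the subtlety is that rewriting one adjacent pair can create new inversions with the neighbours, so the induction must be on a monomial order (e.g.\ compare the multiset or the lexicographically-read sequence of block indices, or use the sum $\sum_i (k-c_i)$ type weight) for which each elementary step is strictly decreasing.

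The hard part is precisely this termination/confluence bookkeeping: verifying that the local rewriting on an adjacent inverted pair, when iterated through a word of length $k+1$, cannot cascade forever and genuinely lands in the span of admissible words. I would handle it by assigning to each monomial $w=x_1\cdots x_m$ the weight $W(w)=\sum_{i=1}^{m}(k-c_i)\,(m+1)^{\,m-i}$ (a base-$(m+1)$ evaluation of the reversed index sequence, which rewards pushing small block-indices to the left), and checking that replacing an inverted adjacent pair $x_ix_{i+1}$ — where $c_i\ge c_{i+1}$ — by any pair $x'x''$ occurring with it on a common $\prec$-chain strictly decreases $W$; since $W$ takes values in a well-ordered set, the process terminates, and a terminal monomial has no inverted adjacent pair, hence is admissible. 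Because there are no admissible words of length $k+1$, every degree-$(k+1)$ element of $R$ is a $\K$-linear combination of an empty set of monomials, i.e.\ $R_{k+1}=\{0\}$, which is exactly $(k+1)$-step nilpotency in the sense of Definition~\ref{kstep}. A short final remark would confirm that the linear relations from $f_\alpha$ suffice (we never need to invert scalars or worry about characteristic), since each rewrite only expresses one monomial as a linear combination of others inside $\K\langle X\rangle/I$.
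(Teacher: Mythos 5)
Your central reduction---that every word of length $\geq 2$ in $R$ is a $\K$-linear combination of ``admissible'' words with strictly increasing block indices---proves something strictly stronger than the theorem, and that stronger statement is false. Take $k=2$, $X=\{a,b\}$, $A_1=\{a\}$, $A_2=\{b\}$. Then $M=\{(a,a),(b,a),(b,b)\}$ and the only $\prec$-comparability is $(a,a)\prec(b,b)$, so $f_1=aa+bb$ and $f_2=ba$ satisfy all hypotheses of Theorem~\ref{chin}. The only admissible degree-$2$ monomial is $ab$, yet $\dim R_2=2$ (spanned by the images of $aa$ and $ab$), so $aa$ is \emph{not} a combination of admissible words in $R$. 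The theorem controls only $R_{k+1}$ and says nothing about the lower graded components, so already your base case (length $2$) cannot be pushed through. Your termination device fails for a reason you half-anticipate when you hedge about ``the direction'': solving $f_\alpha$ for the monomial $xy$ puts \emph{all} other elements of the chain $\supp(f_\alpha)$ on the right-hand side, both those $\prec$-above $(x,y)$ and those $\prec$-below, and you cannot choose to receive only one kind. For a replacement pair $(x',x'')\prec(x,y)$ both new block indices are strictly smaller than the old ones, so your weight $W$ strictly \emph{increases}; the rewriting is not monotone. Nor can you fix a direction by always eliminating the $\prec$-least element of each support, since the covering hypothesis does not make every element of $M$ the least element of some support: in the example above $(b,b)$ sits only at the top of its chain, and indeed $bb\neq0$ in $R$.

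The paper's proof circumvents exactly this obstacle with a global argument confined to words of length $k+1$. Assuming $R_{k+1}\neq\{0\}$, it selects a word $x_1\cdots x_{k+1}\notin I$ whose tuple of block indices is lexicographically minimal counting from the right, then sweeps through the word from left to right. At each step the down-chain terms are killed not by a weight but by that minimality (substituting them produces a word whose index tuple is strictly smaller, hence lies in $I$), while each surviving up-chain substitution strictly increases the block index of the letter at the current position. This manufactures $k+1$ strictly increasing indices in $\{1,\dots,k\}$, a contradiction. Some such global minimality device, replacing the local monotonicity you attempt, is the missing ingredient in your proposal.
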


We conclude the introduction by providing a specific example of an
application of Theorem~\ref{chin}.

\begin{example} \label{ex8} Let $X=\{a,b,c,p,q,x,y,z\}$ be an
$8$-element set partitioned into $3$ subsets $A_1=\{a,b,c\}$,
$A_2=\{p,q\}$ and $A_3=\{x,y,z\}$. Let $M$ and the partial
ordering $\prec$ on $M$ be as in Definition~{\rm\ref{paror}}. Consider
the following $25$ quadratic relations:
\begin{align*}
f_1&=xc,&f_2&=xa,&f_3&=xp+ab,&f_4&=yz+qc,\qquad
f_5=pq,
\\
f_6&=yc,&f_7&=ya,&f_8&=yp+bb,&f_9&=yy+qb,
\\
f_{10}&=zc,&f_{11}&=za,&f_{12}&=zp+cb,&f_{13}&=yx+qa,
\\
f_{14}&=xb,&f_{15}&=xq+ac,&f_{16}&=xz+pc,&f_{17}&=zz+qq+ca,
\\
f_{18}&=yb,&f_{19}&=yq+bc,&f_{20}&=xy+pb,&f_{21}&=zy+qp+ba,
\\
f_{22}&=zb,&f_{23}&=zq+cc,&f_{24}&=xx+pa,&f_{25}&=zx+pp+aa.
\end{align*}
It is straightforward to verify that the support of each $f_j$ is a
chain in $(M,\prec)$ and that the union of $\supp(f_j)$ for $1\leq
j\leq 25$ is $M$. Theorem~{\rm\ref{chin}} ensures that the algebra given
by the $8$-element generator set $X$ and the relations $f_j$
with $1\leq j\leq 25$ is $4$-step nilpotent. Incidentally,
$25=\bigl\lceil\phi_4\cdot8^2\bigr\rceil$, which means $($see
Corollary~{\rm GS)} that a quadratic algebra given by $8$ generators and
$\leq24$ relations is never $4$-step nilpotent.
\end{example}

\section{Combinatorial lemmas}

Theorem~\ref{chin} allows us to construct $k$-step nilpotent
quadratic algebras with few relations. In order to do this, we need
an estimate on the number of relations in an algebra featuring in
Theorem~\ref{chin}. Recall that the {\it width} $w(X,<)$ of a
partially ordered set $(X,<)$ is the supremum of the cardinalities
of antichains in $X$.

\begin{lemma}\label{esti} Let $k\in\N$, $\{A_1,\dots,A_k\}$ be
a partition of a finite set $X$ and $M\subseteq X^2$ be the set
defined in $(\ref{M})$ with the partial ordering $\prec$ introduced
in Definition~$\ref{paror}$. For $1\leq q\leq k$, let
$B_q=\bigcup\limits_{j\geq q\geq m}A_j\times A_m$. Then
$w(M,\prec)=\max\{|B_1|,\dots,|B_k|\}$.
\end{lemma}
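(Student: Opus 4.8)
The plan is to establish the equality $w(M,\prec)=\max\{|B_1|,\dots,|B_k|\}$ by proving the two inequalities separately. For the lower bound $w(M,\prec)\geq\max_q|B_q|$, I would first observe that each $B_q$ is an antichain in $(M,\prec)$. Indeed, take distinct $(a,b),(c,d)\in B_q$, say $(a,b)\in A_l\times A_j$ and $(c,d)\in A_m\times A_r$, where by definition of $B_q$ we have $l\geq q\geq j$ and $m\geq q\geq r$. If these two elements were comparable, say $(a,b)\prec(c,d)$, then by Definition~\ref{paror} we would need $m\geq r>l\geq j$; but $r\leq q\leq l$ contradicts $r>l$. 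Symmetrically $(c,d)\prec(a,b)$ is impossible. Hence $B_q$ is an antichain, so $w(M,\prec)\geq|B_q|$ for every $q$, giving the lower bound.

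For the upper bound $w(M,\prec)\leq\max_q|B_q|$, the natural tool is Dilworth's theorem (or, more directly, Mirsky's theorem on covering a poset by antichains — but here a chain decomposition is cleaner): it suffices to cover $M$ by at most $\max_q|B_q|$ chains, or equivalently to exhibit a chain decomposition whose size matches. Actually the slicker route is to partition $M$ into chains indexed in a way that each chain meets each $B_q$ in at most one point, so that the number of chains needed is exactly $\max_q|B_q|$. I would decompose $M$ as follows: for $(a,b)\in A_l\times A_j$ with $l\geq j$, group elements according to how they "sit" relative to the indices; a chain in $(M,\prec)$ is precisely a sequence $(a_1,b_1)\prec(a_2,b_2)\prec\cdots$ where the index pairs $(l_i,j_i)$ satisfy $l_i\geq j_i$ and the comparability condition forces, reading Definition~\ref{paror}, a strict "nesting/shift" pattern $j_1\leq l_1<j_2\leq l_2<\cdots$ is not quite it — rather $l_{i}\geq j_{i}$ and for consecutive terms $l_{i+1}\geq j_{i+1}>l_i\geq j_i$. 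So along a chain the intervals $[j_i,l_i]\subseteq\{1,\dots,k\}$ are pairwise disjoint and increasing. Consequently a chain can contain at most one element from each $B_q$ (whose members are exactly those $(a,b)\in A_l\times A_j$ with $j\leq q\leq l$, i.e. $q\in[j,l]$): if $(a_i,b_i),(a_{i'},b_{i'})$ both lay in $B_q$ then $q$ would lie in two disjoint intervals. Therefore, if $M$ is partitioned into $N$ chains, each element of $B_q$ lies in a distinct chain, so $N\geq|B_q|$; taking the maximum over $q$ and using Dilworth's theorem (the minimum number of chains covering a finite poset equals its width) yields $w(M,\prec)\geq\max_q|B_q|$ — wait, that is again the lower bound. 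For the upper bound I instead invoke Mirsky/Dilworth in the antichain direction: the minimum number of chains in a chain decomposition equals the width, and I must show this minimum is $\le\max_q|B_q|$, equivalently exhibit an explicit chain decomposition of that size.

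The explicit decomposition I would build is: fix an arbitrary linear order on each $A_i$, and for each element of the (abstractly) "largest" block structure, thread maximal chains greedily so that the $i$-th chain picks, for each admissible interval-step, the $i$-th available pair. Concretely, since along a chain the index intervals $[j,l]$ are disjoint and increasing, a chain is determined by choosing a chain of disjoint increasing subintervals of $\{1,\dots,k\}$ together with, for each, an element of the corresponding $A_l\times A_j$; one checks that $\max_q|B_q|$ such chains suffice to exhaust $M$ by a counting/Hall-type argument matched to the fibers over each index $q$. I expect the main obstacle to be exactly this construction of an optimal chain decomposition — verifying that greedily threading chains never strands an element and that the total number used is $\max_q|B_q|$ rather than something larger. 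The cleanest way around it is probably to apply Dilworth's theorem directly in its maximum-antichain form: show that any antichain $\Gamma\subseteq M$ has $|\Gamma|\le\max_q|B_q|$ by choosing, for each $(a,b)\in\Gamma$ with $(a,b)\in A_l\times A_j$, the index $q=q(a,b)$ equal to $l$ (or to $j$), and arguing that the map $(a,b)\mapsto(a,b)$ lands $\Gamma$ inside a single $B_q$ after grouping — more carefully, that two elements of $\Gamma$ with overlapping intervals $[j,l]\cap[j',l']\ne\varnothing$ must be comparable, which contradicts antichain-ness; hence all intervals $[j,l]$ for $(a,b)\in\Gamma$ are pairwise disjoint, but then $\Gamma$ need not lie in one $B_q$, so one refines: the elements of $\Gamma$ whose interval contains a fixed $q$ all lie in $B_q$ and, having pairwise disjoint intervals all containing $q$, there is at most — no; pairwise disjoint intervals cannot all contain the same $q$ unless there is only one. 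This shows each $q$ is covered by at most one interval from $\Gamma$, hence $|\Gamma|$ equals the number of its (disjoint) intervals, which is at most... this needs the reverse bound. The correct finish: map each $(a,b)\in\Gamma$, say in $A_l\times A_j$, to the pair $(j,l)$; these intervals partition a subset of $\{1,\dots,k\}$, so $\sum_{(a,b)\in\Gamma}(l-j+1)\le k$, which bounds $|\Gamma|$ only weakly. So the honest approach is the chain-decomposition one, and I accept that constructing it — effectively a routing argument showing $\max_q|B_q|$ chains suffice — is the technical heart of the proof; Dilworth then upgrades "width $\le$ number of chains" to the claimed equality once the lower bound from the first paragraph is in hand.
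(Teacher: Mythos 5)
Your lower bound is fine and matches the paper: each $B_q$ is an antichain, so $w(M,\prec)\geq\max_q|B_q|$. But the upper bound is where your proposal genuinely breaks down, and the breakdown stems from a reversed reading of Definition~\ref{paror}. Encode $(a,b)\in A_l\times A_j$ (with $l\geq j$) by the interval $[j,l]\subseteq\{1,\dots,k\}$. Then $(a,b)\prec(c,d)$ with $(c,d)\in A_m\times A_r$ holds iff $r>l$, i.e.\ iff the interval of $(c,d)$ lies strictly \emph{above} that of $(a,b)$. Hence two elements are comparable exactly when their intervals are disjoint (and ordered), and \emph{incomparable exactly when their intervals intersect}. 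You asserted the opposite --- ``two elements of $\Gamma$ with overlapping intervals must be comparable'' --- and were consequently led to the useless conclusion that an antichain has pairwise disjoint intervals, after which every route you tried (greedy chain threading, a Hall-type routing argument, bounding $\sum(l-j+1)\leq k$) stalls. With the correct correspondence the upper bound is immediate: the intervals of an antichain $\Gamma$ pairwise intersect, so by the Helly property of intervals on a line they share a common point $q$, and since $(a,b)\in A_l\times A_j$ lies in $B_q$ precisely when $q\in[j,l]$, the whole of $\Gamma$ is contained in the single set $B_q$. This is exactly the ``straightforward exercise'' the paper invokes (every antichain is contained in some $B_q$), and no appeal to Dilworth or to an explicit chain decomposition is needed for this lemma --- Dilworth enters only later, in the proof of Theorem~\ref{asop}, to convert the width into a chain cover.

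As it stands, then, your argument proves only $w(M,\prec)\geq\max_q|B_q|$; the reverse inequality is asserted but never established, and the one concrete attempt at it rests on a false claim about comparability. Fix the direction of the interval criterion and the Helly argument closes the gap in two lines.
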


\begin{proof} It is a straightforward exercise to verify
that each $B_q$ is an antichain in $(M,\prec)$ and that every
antichain is contained in at least one of the sets $B_q$.
\end{proof}

We also need the following observation.

\begin{lemma}\label{alpk} Let $k\geq 2$ and
$\alpha_0,\alpha_1,\dots,\alpha_{k-1}\geq 0$ be defined by the
fromulae $\alpha_0=0$, $\alpha_1=\phi_k$ and
$\alpha_j=\frac{\phi_k}{1-\alpha_{j-1}}$ for $2\leq j\leq k-1$. Then
\begin{align}
&\text{$0=\alpha_0<\alpha_1<{\dots}<\alpha_{k-1}=1$}, \label{alpk00}
\\
&\text{$\alpha_j(1-\alpha_{j-1})=\phi_k$ for $1\leq j\leq k-1$} \label{alpk01}
\\
&\text{and $\max\limits_{1\leq j\leq k-1}(\alpha_j-\alpha_{j-1})=\phi_k$
$($attained for $j=1$ and for $j=k-1)$.}\label{alpk02}
\end{align}
\end{lemma}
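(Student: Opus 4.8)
The plan is to treat the sequence $\alpha_j$ as the orbit of the Möbius map $T(x)=\phi_k/(1-x)$ and extract from it the three asserted properties. I would begin with \eqref{alpk00}. Since $\alpha_1=\phi_k\in(0,\tfrac14]\subset(0,1)$, induction shows that if $0<\alpha_{j-1}<1$ then $1-\alpha_{j-1}\in(0,1)$, so $\alpha_j=\phi_k/(1-\alpha_{j-1})>\phi_k=\alpha_1>0$; one must still check $\alpha_j<1$ for $j\le k-1$, which is the subtle point, since $T$ pushes values up and only terminates exactly at $1$ after $k-1$ steps. Monotonicity $\alpha_{j-1}<\alpha_j$ is immediate once both lie in $(0,1)$: indeed $\alpha_j-\alpha_{j-1}=\frac{\phi_k-\alpha_{j-1}(1-\alpha_{j-1})}{1-\alpha_{j-1}}$, and the numerator is $\phi_k-\alpha_{j-1}+\alpha_{j-1}^2$, a quadratic in $\alpha_{j-1}$ with discriminant $1-4\phi_k=\tan^2\!\bigl(\tfrac{\pi}{k+1}\bigr)\cdot 4\phi_k - \ldots$; more cleanly, $\phi_k=\tfrac14\sec^2\theta$ with $\theta=\tfrac{\pi}{k+1}$, so the numerator $x^2-x+\phi_k$ has roots $\tfrac12(1\pm i\tan\theta)$, hence is strictly positive for all real $x$, giving $\alpha_j>\alpha_{j-1}$ unconditionally (as long as $1-\alpha_{j-1}>0$).

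The cleanest route to the exact values is to solve the recursion in closed form. The plan is to substitute $\alpha_j=\tfrac12 - \tfrac12\tan\theta\,\cot\psi_j$ (or equivalently to diagonalise the associated $2\times2$ matrix $\left(\begin{smallmatrix}0&\phi_k\\-1&1\end{smallmatrix}\right)$, whose eigenvalues are $\tfrac12(1\pm i\tan\theta)=\tfrac{1}{2\cos\theta}e^{\pm i\theta}$). With the right normalisation one gets $1-\alpha_{j}=\dfrac{\sin(j+1)\theta}{2\cos\theta\,\sin j\theta}$ for $1\le j\le k-1$ — this is exactly the continued-fraction unwinding of $T$, and it is consistent with $\alpha_1=\phi_k$ since $1-\phi_k=\frac{\sin2\theta}{2\cos\theta\sin\theta}=1$… so I would pin down the precise constant by matching $j=1$. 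Once the formula $1-\alpha_{j-1}=\frac{\sin j\theta}{2\cos\theta\,\sin(j-1)\theta}$ (say) is verified by induction using the addition formula $\sin(j+1)\theta = 2\cos\theta\sin j\theta - \sin(j-1)\theta$, all three claims drop out: $\alpha_{k-1}=1$ because $\sin(k+1)\theta=\sin\pi=0$; $0<\alpha_j<1$ for $j<k-1$ because then $j\theta,(j+1)\theta\in(0,\pi)$ so both sines are positive and $\sin(j+1)\theta/(2\cos\theta\sin j\theta)$ is a positive number less than $1$ (the inequality $<1$ following from $\sin(j+1)\theta<2\cos\theta\sin j\theta$, i.e. $\sin(j-1)\theta>0$); and \eqref{alpk01} is just the defining relation $\alpha_j(1-\alpha_{j-1})=\phi_k$ rewritten, which holds by construction for $2\le j\le k-1$ and for $j=1$ reads $\phi_k\cdot 1=\phi_k$.

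For \eqref{alpk02} I would compute $\alpha_j-\alpha_{j-1}$ in the trigonometric parametrisation. Writing $1-\alpha_{j-1}=\frac{\sin j\theta}{2\cos\theta\sin(j-1)\theta}$ and $1-\alpha_j=\frac{\sin(j+1)\theta}{2\cos\theta\sin j\theta}$, the difference telescopes to $\alpha_j-\alpha_{j-1}=\frac{\sin j\theta}{2\cos\theta\sin(j-1)\theta}-\frac{\sin(j+1)\theta}{2\cos\theta\sin j\theta}=\frac{\sin^2 j\theta-\sin(j-1)\theta\sin(j+1)\theta}{2\cos\theta\,\sin(j-1)\theta\,\sin j\theta}$, and the product-to-sum identity gives $\sin^2 j\theta-\sin(j-1)\theta\sin(j+1)\theta=\sin^2\theta$. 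Hence $\alpha_j-\alpha_{j-1}=\frac{\sin^2\theta}{2\cos\theta\,\sin(j-1)\theta\,\sin j\theta}$, a positive quantity whose denominator $\sin(j-1)\theta\sin j\theta$ is unimodal in $j$ over $1\le j\le k-1$ and is minimised exactly at the two ends $j=1$ and $j=k-1$ (since $\sin 0\cdot\sin\theta$ is the $j=1$ convention giving the largest value $\phi_k$, and by the symmetry $\theta\leftrightarrow(k+1)\theta-\theta$ the value at $j=k-1$ equals that at $j=1$). Plugging $j=1$ back in and using $\frac{\sin^2\theta}{2\cos\theta\sin\theta}=\tfrac12\tan\theta$… here I must reconcile normalisations, but the end value is forced to be $\phi_k$ by \eqref{alpk01} applied at $j=1$, namely $\alpha_1-\alpha_0=\alpha_1=\phi_k$. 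So the maximum over $j$ is $\phi_k$, attained precisely at $j=1$ and $j=k-1$, which is \eqref{alpk02}.

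The main obstacle is bookkeeping with the normalising constant in the closed form: it is easy to get the trigonometric identity $1-\alpha_j=\frac{\sin(j+1)\theta}{2\cos\theta\sin j\theta}$ right up to an index shift or a factor, and the cleanest guard against that is to verify the base case $j=1$ against $\alpha_1=\phi_k=\tfrac14\sec^2\theta$ and then run the induction strictly from the recursion $\alpha_j=\phi_k/(1-\alpha_{j-1})$ together with $\sin(j+1)\theta=2\cos\theta\sin j\theta-\sin(j-1)\theta$. Everything else — positivity, monotonicity, the terminal value $1$, and the two identities \eqref{alpk01}–\eqref{alpk02} — then follows by elementary trigonometry with no further case analysis.
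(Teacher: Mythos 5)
Your proposal is correct in substance but takes a genuinely different route from the paper. The paper never writes the orbit of $x\mapsto\phi_k/(1-x)$ in closed form at $t=\phi_k$; instead it studies the iterates $h_m(t)=f_t^{[m]}(0)$ as rational functions of the parameter $t$, derives $h_m(t)=t\frac{a^m-\overline{a}^m}{a^{m+1}-\overline{a}^{m+1}}$ with $a=\frac{1+i\sqrt{4t-1}}{2}$, and proves the stronger statement that $\phi_{m+1}$ is the \emph{smallest} solution of $h_m(t)=1$ on $(\frac14,1]$ via a monotonicity-in-$t$ argument comparing the arguments of two unimodular numbers; (\ref{alpk02}) is then obtained without any closed form, from the convexity of $x\mapsto\frac{\phi_k}{1-x}-x$ on $[0,1-\phi_k]$, whose endpoint values are both $\phi_k$. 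Your direct diagonalisation at the single value $t=\phi_k$ is leaner for the lemma as stated: the correct normalisation is $1-\alpha_j=\frac{\sin((j+2)\theta)}{2\cos\theta\,\sin((j+1)\theta)}$ with $\theta=\frac{\pi}{k+1}$ (your tentative formula is shifted by one index, as you yourself detect from the failed $j=1$ check), and with it $\alpha_{k-1}=1$ from $\sin((k+1)\theta)=0$, the strict inequalities from positivity of $\sin(j\theta)$ for $1\leq j\leq k$, and $\alpha_j-\alpha_{j-1}=\frac{\sin^2\theta}{2\cos\theta\,\sin(j\theta)\sin((j+1)\theta)}$ via $\sin^2((j+1)\theta)-\sin(j\theta)\sin((j+2)\theta)=\sin^2\theta$; the minimisation of $\sin(j\theta)\sin((j+1)\theta)=\frac{\cos\theta-\cos((2j+1)\theta)}{2}$ at $j=1$ and $j=k-1$ follows from the symmetry of $(2j+1)\theta$ about $\pi$. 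Two small slips to repair besides the index shift: $\phi_k=\frac14\sec^2\theta$ lies in $(\frac14,1]$, not $(0,\frac14]$ (only $\phi_k\leq1$ is needed, and $k=2$ with $\phi_2=1$ should be noted as the degenerate case $\alpha_1=\alpha_{k-1}=1$); and your quadratic-discriminant argument for monotonicity, $x^2-x+\phi_k>0$ since $1-4\phi_k=-\tan^2\theta<0$, is correct and in fact makes the separate monotonicity discussion independent of the closed form.
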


\begin{proof} Obviously, (\ref{alpk01}) is a direct consequence  of the definition of $\alpha_j$. Next, (\ref{alpk02}) follows easily from (\ref{alpk00}). Indeed, assuming that (\ref{alpk00}) holds, we have $\alpha_{k-1}=1$, which implies $\alpha_{k-2}=1-\phi_k$. Since $\alpha_j-\alpha_{j-1}=\frac{\phi_k}{1-\alpha_{j-1}}-\alpha_{j-1}$ and $0\leq \alpha_{j-1}\leq 1-\phi_k$ for $1\leq j\leq k-1$, (\ref{alpk02}) follows from the elementary fact that the function $\frac{\phi_k}{1-x}-x$ on the interval $[0,1-\phi_k]$ attains its maximal value at the end-points.

Thus it remains to verify (\ref{alpk00}). For $0<t\leq 1$ consider the rational function $f_t(x)=\frac{t}{1-x}$ and for $m\in\Z_+$ let $f_t^{[m]}$ be the $m^{\rm th}$ iterate of $f_t$: $f_t^{[0]}(x)=x$ and $f_t^{[m]}=f_t\circ{\dots}\circ f_t$ $m$ times for $m\in\N$. We start with an elementary observation

\begin{equation}\label{tlqu1}
\begin{array}{l}
\text{if $0\leq t\leq\frac14$, then the sequence $\{f_t^{[m]}(0)\}_{m\in\Z_+}$ is strictly increasing}
\\
\text{and converges to the fixed point $w_t=\frac{1-\sqrt{1-4t}}{2}\in\bigl[0,\frac12\bigr]$ of $f_t$.}
\end{array}
\end{equation}
For instance, to justify (\ref{tlqu1}), one can use induction with respect to $m$ to prove the chain of inequalities $0\leq f_t^{[m]}(0)<f_t^{[m+1]}(0)<w_t$.

Next, it is easy to verify that if $\frac14<t\leq 1$, then $f_t(x)>x$ for $x\in[0,1)$. Hence,
\begin{equation}\label{tlqu2}
\text{$f_t^{[m+1]}(0)>f_t^{[m]}(0)$ provided $0\leq f_t^{[m]}(0)<1$}.
\end{equation}
For each $m\in\Z_+$, we consider the rational function $h_m(t)=f_t^{[m]}(0)$ of the variable $t$. Now we observe that (\ref{alpk02}) follows from the claim
\begin{equation}\label{tlqu3}
\text{for every $m\in\N$, $\phi_{m+1}$ is the smallest solution of the equation $h_m(t)=1$ on
$\textstyle\bigl(\frac14,1\bigr]$.}
\end{equation}
Indeed, assume that (\ref{tlqu3}) holds. By (\ref{tlqu1}), $0<h_m(t)<\frac12$ for every $m\in\N$ and $t\in\bigl(0,\frac14\bigr]$. Since the sequence $\{\phi_m\}$ is decreasing, $h_j(t)<1$ whenever $j\leq m$ and $0\leq t< \phi_{m+1}$. Using (\ref{tlqu3}) with $m=k-1$ and (\ref{tlqu2}), we now have
$$
0=f^{[0]}_{\phi_k}(0)<f^{[1]}_{\phi_k}(0)<{\dots}<f^{[k-1]}_{\phi_k}(0)=h_{k-1}(\phi_k)=1.
$$
On the other hand, by definition of $\alpha_j$, $\alpha_j=f_{\phi_k}^{[j]}(0)$ for $0\leq j\leq k-1$ and (\ref{alpk02}) follows.

Thus it remains to prove (\ref{tlqu3}). Using the obvious recurrent relation $h_{j+1}(t)=\frac{t}{1-h_j(t)}$ together with the initial data $h_0=0$, one can use the induction with respect to $m$ to verify that
$$
\textstyle h_m(t)=t\frac{a^m-\overline{a}^m}{a^{m+1}-\overline{a}^{m+1}}\ \ \text{for $m\in\Z_+$ and $t\in \bigl[\frac14,1\bigr]$, where $a=a(t)=\frac{1+i\sqrt{4t-1}}{2}$}.
$$
Hence for $t\in\bigl[\frac14,1\bigr]$,
\begin{equation}\label{tlqu4}
\textstyle h_m(t)=1\iff (a/\overline{a})^m=(\overline{a}-t)/(a-t)\iff
e^{im\alpha(t)}=e^{i\beta(t)},
\end{equation}
where
$$
\textstyle\alpha(t)=2\arccos\frac1{2\sqrt t} \ \ \text{and}\ \ \beta(t)=2\pi-2\arccos\bigl(\frac1{2t}-1\bigr)
$$
are the arguments of the unimodular complex numbers $a/\overline{a}$ and $(\overline{a}-t)/(a-t)$. The case $m=1$ is trivial. Assuming that $m\geq 2$ and using (\ref{tlqu4}), we see that the smallest $t\in \bigl[\frac14,\frac12\bigr]$ satisfying $h_m(t)=1$ must satisfy $m\alpha(t)=\beta(t)$. Since the function $m\alpha(t)-\beta(t)$ on the interval $\bigl[\frac14,\frac12\bigr]$ is strictly increasing (look at the derivative) and has values of opposite signs at the ends, there is exactly one $t_m\in \bigl[\frac14,\frac12\bigr]$ satisfying $m\alpha(t_m)=\beta(t_m)$. Then $t_m$ is the smallest solution of the equation $h_m(t)=t$ on the interval $\bigl[\frac14,1\bigr]$. Since $\phi_{m+1}\in \bigl[\frac14,\frac12\bigr]$, (\ref{tlqu3}) will follow if we show that $m\alpha(\phi_{m+1})=\beta(\phi_{m+1})$. This is indeed true: plugging in $\phi_{m+1}=\frac1{4\cos^2(\pi/(m+2))}$, we have
\begin{align*}
&\textstyle m\alpha(\phi_{m+1})=2m\arccos\bigl(\cos\bigl(\frac{\pi}{m+2}\bigr)\bigr)=\frac{2\pi m}{m+2};
\\
&\textstyle\beta(\phi_{m+1})=2\pi-2\arccos\bigl(2\cos^2\bigl(\frac{\pi}{m+2}\bigr)-1\bigr)=
2\pi-2\arccos\bigl(\cos\bigl(\frac{2\pi}{m+2}\bigr)\bigr)=2\pi-\frac{4\pi}{m+2}=\frac{2\pi m}{m+2}.
\end{align*}
Hence $m\alpha(\phi_{m+1})=\beta(\phi_{m+1})$, which completes the proof.
\end{proof}

\section{Proof of Theorem~\ref{chin}}

For $k\in\N$, we denote $\N_k=\{1,2,\dots,k\}$.
Assume the contrary. Then the set $\Omega$ of
$j=(j_1,\dots,j_{k+1})\in\N_k^{k+1}$ such that there are $x_1\in
A_{j_1}$, $\dots$, $x_{k+1}\in A_{j_{k+1}}$ for which $x_1\dots
x_{k+1}\notin I$ is non-empty. We endow $\N_k^{k+1}$ with the
lexicographical ordering $<$ counting from the right-hand side. That
is, $j<m$ if and only if there is $l\in\N_{k+1}$ such that $j_l<m_l$
and $j_r=m_r$ for $r>l$. Since $<$ is a total ordering on the finite
set $\N_k^{k+1}$ and $\Omega\subseteq \N_k^{k+1}$ is non-empty,
$\Omega$ has a unique element $j$ minimal with respect to $<$. Since
$j\in\Omega$, there are $x_1\in A_{j_1}$, $\dots$, $x_{k+1}\in
A_{j_{k+1}}$ for which $x_1\dots x_{k+1}\notin I$.

Now we shall construct inductively $m_1,\dots,m_{k+1}\in\N_{k}$ and
monomials $u_1,\dots,u_{k+1}$ in $\K\langle X\rangle$ of degree
$k+1$ such that
\begin{align}
&\text{$m_l>m_{l-1}$ if $l\geq 2$}; \label{ind1}
\\
&u_l\notin I; \label{ind2}
\\
&\text{$u_l=v_lw_lx_{l+1}x_{l+2}\dots x_{k+1}$, where $w_l\in
A_{m_l}$ and $v_l$ is a monomial of degree $l-1$}. \label{ind3}
\end{align}

We start by setting $u_1=x_1\dots x_{k+1}$ and $m_1=j_1$ and
observing that (\ref{ind1}--\ref{ind3}) with $l=1$ are satisfied.
Assume now that $2\leq l\leq k+1$ and that $m_1,\dots,m_{l-1}$ and
$u_1,\dots,u_{l-1}$ satisfying the desired conditions are already
constructed.

If $m_{l-1}<j_l$, then we set $m_l=j_l$, $w_l=x_l$, $u_l=u_{l-1}$
and $v_l=v_{l-1}w_{l-1}$. Using the induction hypothesis, we see
that (\ref{ind1}--\ref{ind3}) are satisfied. It remains to consider
the case $m_{l-1}\geq j_l$. In this case $w_{l-1}x_l\in M$ and
therefore there is $\alpha\in\Lambda$ such that
$(w_{l-1},x_l)\in\supp(f_\alpha)$. Let
$S=\supp(f_\alpha)\setminus\{(w_{l-1},x_l)\}$. Since $f_\alpha\in
I$,
$$
w_{l-1}x_l=\sum_{(a,b)\in S} c_{a,b}ab\ \ (\bmod I)\ \ \ \text{with
$c_{a,b}\in\K$.}
$$
Using (\ref{ind3}) for $l-1$ and the above display, we get
$$
u_{l-1}=\sum_{(a,b)\in S}c_{a,b}v_{l-1}abx_{l+1}\dots x_{k+1}\ \
(\bmod I).
$$
Since  $\supp(f_\alpha)$ is a chain in $M$ with respect to $\prec$,
for every $(a,b)\in S$, either $(a,b)\prec (w_{l-1},x_l)$ or
$(w_{l-1},x_l)\prec (a,b)$. If $(a,b)\prec (w_{l-1},x_l)$, $b$ is
contained in $A_q$ with $q<j_l$. Using the definition of $\Omega$
and the minimality of $j$ in $\Omega$, we obtain
$$
v_{l-1}abx_{l+1}\dots x_{k+1}\in I\ \ \text{if $(a,b)\in S$,
$(a,b)\prec (w_{l-1},x_l)$.}
$$
According to the last two displays
$$
u_{l-1}=\sum_{(a,b)\in S\atop (w_{l-1},x_l)\prec
(a,b)}c_{a,b}v_{l-1}abx_{l+1}\dots x_{k+1}\ \ (\bmod I).
$$
By (\ref{ind2}) for $l-1$, $u_{l-1}\notin I$. Thus, using the above
display, we can pick $(a,b)\in S$ such that $(w_{l-1},x_l)\prec
(a,b)$ and $v_{l-1}abx_{l+1}\dots x_{k+1}\notin I$. Now we set
$u_l=v_{l-1}abx_{l+1}\dots x_{k+1}$, $w_l=b$, $v_l=v_{l-1}a$ and
take $m_l$ such that $w_l=b\in A_{m_l}$.

Since $w_{l-1}\in A_{m_{l-1}}$ and $(w_{l-1},x_l)\prec
(a,b)=(a,w_l)$, we have $m_l>m_{l-1}$. Thus (\ref{ind1}--\ref{ind3})
are satisfied. This completes the inductive procedure of
constructing $m_1,\dots,m_{k+1}$ and $u_1,\dots,u_{k+1}$. By
(\ref{ind1}), $m_j$ for $1\leq j\leq k+1$ are $k+1$ pairwise
distinct elements of the $k$-element set $\N_k$. We have arrived to
a contradiction, which proves that $R$ is $(k+1)$-step nilpotent.

\section{Proofs of Theorem~\ref{asop} and Lemma~\ref{dnph}}

Let $k\geq2$, $n\in\N$ and $a_1,\dots,a_{k-1}\in\Z_+$ be such that
$a_1+{\dots}+a_{k-1}=n$. In order to prove Theorem~\ref{asop}, it
suffices to prove that there is a quadratic $\K$-algebra $R$ given
by $n$ generators and
$$
d=\max_{1\leq j\leq k-1}(a_1+{\dots}+a_j)(a_j+{\dots}+a_{k-1})
$$
relations such that $R$ is $k$-step nilpotent.

Let $X$ be an $n$-element set of generators. Since
$a_1+{\dots}+a_{k-1}=n$, we can present $X$ as the union of the
pairwise disjoint sets $A_1,\dots,A_{k-1}$ with $|A_j|=a_j$ for
$1\leq j\leq k-1$. Consider the set $M\subset X^2$ defined in
(\ref{M}) and the partial ordering $\prec$ on $M$ generated by the
partition $\{A_1,\dots,A_{k-1}\}$. For $1\leq j\leq k-1$, let
$B_j=\bigcup\limits_{q\geq j\geq m}A_q\times A_m$. Clearly,
$|B_j|=(a_1+{\dots}+a_j)(a_j+{\dots}+a_{k-1})$. Hence
$d=\max\{|B_1|,\dots,|B_{k-1}|\}$. By Lemma~\ref{esti},
$w(M,\prec)=d$. According to the Dilworth theorem (see \cite{gal}
for a short inductive proof) the width of a finite partially ordered
set $P$ is precisely the minimal number of chains needed to cover
$P$. Hence, we can write $M=\bigcup\limits_{q=1}^d C_q$, where each
$C_q$ is a chain in $M$. Now we consider the homogeneous degree 2
elements of $\K\langle X\rangle$ given by
$$
f_q=\sum_{(a,b)\in C_q}ab\ \ \text{for $1\leq q\leq d$}.
$$
Clearly $\supp(f_q)=C_q$. Thus the union of the supports of $f_q$ is
$M$ and each $\supp(f_q)$ is a chain in $M$. By Theorem~\ref{chin},
the algebra $R$ given by the relations $f_q$ for $1\leq q\leq d$ is
$k$-step nilpotent. This completes the proof of Theorem~\ref{asop}.

Now we shall prove Lemma~\ref{dnph}. By Theorems~GS and~\ref{asop},
$d_{n,k}\geq \phi_k n^2$ for every $k\geq 2$ and $n\in\N$. This
proves the first inequality in (\ref{estim}). It remains to prove
the second one. By Lemma~\ref{alpk}, there are
$\alpha_0,\dots,\alpha_{k-1}\in[0,1]$ such that
$0=\alpha_0<\alpha_1<{\dots}<\alpha_{k-1}=1$ and
$\alpha_j(1-\alpha_{j-1})=\phi_k$ for $1\leq j\leq k-1$. Now for
$0\leq j\leq k-1$ let $b_j=\lceil n\alpha_j-\frac12\rceil$. Clearly
$0=b_0\leq b_1\leq{\dots}\leq b_{k-1}=n$. Now we set
$a_j=b_j-b_{j-1}$ for $1\leq j\leq k-1$. Then $a_j\in\Z_+$ and
$a_1+{\dots}+a_{k-1}=n$. Hence
$$
d_{n,k}\leq\max_{1\leq j\leq
k-1}(a_1+{\dots}+a_j)(a_j+{\dots}+a_{k-1})=\!\!\max_{1\leq j\leq
k-1} b_j(n-b_{j-1})=\!\!\max_{1\leq j\leq k-1}\bigl\lceil
n\alpha_j-{\textstyle \frac12}\bigr\rceil \cdot \bigl\lfloor
n(1-\alpha_{j-1})+{\textstyle \frac12}\bigr\rfloor.
$$
It is easy to see that for every $\alpha,\beta\in[0,1]$,
$$
\textstyle\bigl\lceil n\alpha-\frac12\bigr\rceil \cdot \bigl\lfloor
n\beta+\frac12\bigr\rfloor -\alpha\beta n^2\leq
\frac{\alpha+\beta}{2}n+\frac14.
$$
From the last two displays and the equalities
$\alpha_j(1-\alpha_{j-1})=\phi_k$ it follows that
$$
d_{n,k}\leq \phi_k n^2+\frac n2\max_{1\leq j\leq
k-1}(1+\alpha_j-\alpha_{j-1})+\frac14.
$$
By Lemma~\ref{alpk}, the maximum in the above display equals
$\phi_k$. Thus $d_{n,k}\leq \phi_k n^2+\frac{1+\phi_k}{2}n+\frac14$,
which completes the proof of Lemma~\ref{dnph}.

\section{4-Step nilpotency and the Fibonacci numbers}

First, we derive an explicit formula for $d_{n,4}$.

\begin{lemma}\label{dn4} For every $n\in\N$,
\begin{equation}\label{dn4f}
d_{n,4}=\min\bigl\{\bigl\lceil {\textstyle
\frac{\sqrt5-1}{2}}n\bigr\rceil^2,n\bigl\lceil {\textstyle
\frac{3-\sqrt5}{2}}n\bigr\rceil\bigr\}.
\end{equation}
\end{lemma}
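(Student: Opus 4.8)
The plan is to collapse the three‑variable minimax defining $d_{n,4}$ to a one‑variable integer optimization. Writing $p=a_1$, $q=a_3$, so that $a_2=n-p-q$, admissible triples $(a_1,a_2,a_3)$ correspond exactly to pairs $(p,q)\in\Z_+^2$ with $p+q\le n$, and for $j=1,2,3$ the three products in $(\ref{dnk})$ become $np$, $(n-p)(n-q)$, $nq$; thus
\[
d_{n,4}=\min_{p,q\in\Z_+,\ p+q\le n}\max\{np,\ nq,\ (n-p)(n-q)\}.
\]
This quantity is symmetric in $p,q$. For the lower bound, given any admissible $(p,q)$ set $s=\max(p,q)\in\{0,1,\dots,n\}$; then $\max\{np,nq\}=ns$, and since $0\le n-s\le n-p,n-q$ we get $(n-p)(n-q)\ge(n-s)^2$. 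Hence the value is $\ge\max\{ns,(n-s)^2\}$, and therefore $d_{n,4}\ge\min_{0\le s\le n}\max\{ns,(n-s)^2\}$, the minimum taken over integers $s$.

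For the matching upper bound I would exhibit, for each integer $s\in[0,n]$, an admissible pair whose value equals $\max\{ns,(n-s)^2\}$. If $2s\le n$, take $p=q=s$: the three products are $ns$, $ns$, $(n-s)^2$. If $2s>n$, take $p=s$, $q=n-s$: the products are $ns$, $n(n-s)$, $s(n-s)$, and since $s>n/2$ one has $n(n-s),s(n-s)\le ns$, so the value is $ns$. Here I use the identity $(n-s)^2-ns=s^2-3ns+n^2$, whose roots are $\tfrac{3\mp\sqrt5}{2}n$, to conclude that $(n-s)^2\le ns$ for all $s$ with $\phi_4 n\le s\le n$ (the larger root exceeds $n$); thus $ns=\max\{ns,(n-s)^2\}$ in this case as well. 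Combining the two bounds gives $d_{n,4}=\min_{0\le s\le n,\ s\in\Z}\max\{ns,(n-s)^2\}$.

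Finally I would solve this one‑variable problem. On $[0,n]$ the function $s\mapsto ns$ is increasing, $s\mapsto(n-s)^2$ is decreasing, and by the quadratic computation $(n-s)^2\ge ns$ precisely for $s\le\phi_4 n$, where $\phi_4=\tfrac{3-\sqrt5}{2}$ is irrational, so $\phi_4 n\notin\Z$ and every integer $s\in[0,n]$ satisfies $s\le\lfloor\phi_4 n\rfloor$ or $s\ge\lceil\phi_4 n\rceil$ (both of these integers lie in $[0,n]$ since $0<\phi_4<1$). For $s\le\lfloor\phi_4 n\rfloor$ the value is $(n-s)^2\ge(n-\lfloor\phi_4 n\rfloor)^2$, attained at $s=\lfloor\phi_4 n\rfloor$; for $s\ge\lceil\phi_4 n\rceil$ it is $ns\ge n\lceil\phi_4 n\rceil$, attained at $s=\lceil\phi_4 n\rceil$. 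Thus $d_{n,4}=\min\{(n-\lfloor\phi_4 n\rfloor)^2,\ n\lceil\phi_4 n\rceil\}$, and since $n-\lfloor\phi_4 n\rfloor=\lceil(1-\phi_4)n\rceil=\lceil\tfrac{\sqrt5-1}{2}n\rceil$ this is exactly $(\ref{dn4f})$. The one genuinely delicate point is the reduction step: checking that for every $s$ the middle product $(n-p)(n-q)$ can be brought down to exactly $(n-s)^2$ while the two outer products stay below it, which is what the inequality $(n-s)^2\le ns$ for $n/2\le s\le n$ secures.
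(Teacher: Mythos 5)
Your proposal is correct and follows essentially the same route as the paper: both reduce the minimax over triples to the one-variable problem $\min_{s\in\Z,\,0\le s\le n}\max\{ns,(n-s)^2\}$ and then split at the irrational threshold $\phi_4 n$ to get $\min\{(n-\lfloor\phi_4 n\rfloor)^2,\,n\lceil\phi_4 n\rceil\}$. The only cosmetic difference is that you justify the reduction via $s=\max(p,q)$ with an explicit two-case upper-bound construction, where the paper uses the symmetry $b\le a$ and restricts to $2a\le n$ directly.
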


\begin{proof} Using (\ref{dnk}) with $k=4$ and denoting $a=a_1$ and
$b=a_3$, we obtain
$$
d_{n,4}=\min\{\max\{na,nb,(n-a)(n-b)\}:a,b\in\Z_+,\ a+b\leq n\}.
$$
An obvious symmetry consideration yields
$$
d_{n,4}=\min\{\max\{na,nb,(n-a)(n-b)\}:a,b\in\Z_+,\ b\leq a,\
a+b\leq n\}.
$$
Since $nb\leq na$ and $(n-a)(n-b)\geq (n-a)^2$ when $a,b\in\Z_+$
satisfy $b\leq a\leq n$, we have
\begin{equation}\label{dn4eq}
d_{n,4}=\min\{\max\{na,(n-a)^2\}:a\in\Z_+,\ 2a\leq n\}.
\end{equation}

Now, assume that $a\in\Z_+$ satisfies $2a\leq n$. Solving a
quadratic inequality we see that $na\geq (n-a)^2$ holds precisely
when $a\geq \phi_4n$. Hence (\ref{dn4eq}) can be rewritten as
$$
\begin{array}{l}d_{n,4}=\min\{a_n,b_n\},\ \ \text{where}\\
a_n=\min\{na:a\in\Z_+,\ \phi_4n\leq a\leq n/2\}\ \ \text{and}\ \
b_n=\min\{(n-a)^2:a\in\Z_+,\ a\leq \phi_4n\}.
\end{array}
$$
Clearly, the minimum in the definition of $a_n$ is attained for
$a=\lceil\phi_4 n\rceil$ and the minimum in the definition of $b_n$
is attained for $a=\lfloor \phi_4n\rfloor$.  Hence
$a_n=n\lceil\phi_4 n\rceil$ and $b_n=\lceil(1-\phi_4)n\rceil^2$.
Using the equalities $\phi_4=\frac{3-\sqrt 5}{2}$ and
$1-\phi_4=\frac{\sqrt 5-1}{2}$, we see that (\ref{dn4f}) follows
from the above display.
\end{proof}

\begin{corollary}\label{ttt} The equality $d_{n,4}=\bigl\lceil\phi_4
n^2\bigr\rceil$ holds if and only if either $\bigl\lceil\phi_4
n^2\bigr\rceil$ is divisible by $n$ or $\bigl\lceil\phi_4
n^2\bigr\rceil$ is a square of a positive integer.
\end{corollary}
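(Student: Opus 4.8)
The plan is to read everything off Lemma~\ref{dn4}. Since $\phi_4=\frac{3-\sqrt5}{2}$ and $1-\phi_4=\frac{\sqrt5-1}{2}$, formula~(\ref{dn4f}) says exactly
$$
d_{n,4}=\min\bigl\{\lceil(1-\phi_4)n\rceil^2,\ n\lceil\phi_4 n\rceil\bigr\}.
$$
The one algebraic fact I would isolate first is that $\phi_4$ is a root of $x^2-3x+1$; equivalently, $(1-\phi_4)^2=\phi_4$ (check: $(1-\phi_4)^2=(\frac{\sqrt5-1}{2})^2=\frac{3-\sqrt5}{2}=\phi_4$). This is what forces both quantities under the $\min$ to sit above the threshold $\phi_4 n^2$: indeed $\lceil(1-\phi_4)n\rceil^2\geq((1-\phi_4)n)^2=\phi_4 n^2$ and $n\lceil\phi_4 n\rceil\geq n\cdot\phi_4 n=\phi_4 n^2$. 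Since the left-hand sides are integers, each is in fact $\geq\lceil\phi_4 n^2\rceil$; hence $d_{n,4}\geq\lceil\phi_4 n^2\rceil$ always, and $d_{n,4}=\lceil\phi_4 n^2\rceil$ holds if and only if at least one of $\lceil(1-\phi_4)n\rceil^2$ and $n\lceil\phi_4 n\rceil$ equals $\lceil\phi_4 n^2\rceil$.

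It then remains to characterise each of those two equalities, and here I would use nothing but the monotonicity of $\lceil\cdot\rceil$ together with the integrality of the relevant integers. For the perfect-square case: if $\lceil(1-\phi_4)n\rceil^2=\lceil\phi_4 n^2\rceil$ then $\lceil\phi_4 n^2\rceil$ is obviously a square; conversely, if $\lceil\phi_4 n^2\rceil=m^2$ with $m\in\N$, then $m^2\geq\phi_4 n^2=((1-\phi_4)n)^2$ gives $m\geq(1-\phi_4)n$, whence $m\geq\lceil(1-\phi_4)n\rceil$ because $m\in\Z$, so $m^2\geq\lceil(1-\phi_4)n\rceil^2\geq\lceil\phi_4 n^2\rceil=m^2$ and equality holds throughout. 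For the divisibility case: if $n\lceil\phi_4 n\rceil=\lceil\phi_4 n^2\rceil$ then $n$ divides $\lceil\phi_4 n^2\rceil$; conversely, if $\lceil\phi_4 n^2\rceil=nm$ with $m\in\N$, then $nm\geq\phi_4 n^2$ gives $m\geq\phi_4 n$, hence $m\geq\lceil\phi_4 n\rceil$, so $nm\geq n\lceil\phi_4 n\rceil\geq\lceil\phi_4 n^2\rceil=nm$ and again equality holds throughout. Combining the two characterisations with the equivalence from the previous paragraph gives the corollary.

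The hard part will be: essentially nothing — all the content is already in Lemma~\ref{dn4}, and what is left is the identity $(1-\phi_4)^2=\phi_4$ plus bookkeeping with ceilings. The only place to be careful is the step that upgrades $m\geq\phi_4 n$ to $m\geq\lceil\phi_4 n\rceil$ (and its square-case analogue), which is legitimate precisely because $m$ is an integer. It is also worth recording in passing that $\phi_4 n^2$ is irrational for every $n\geq1$, so $\lceil\phi_4 n^2\rceil>\phi_4 n^2$ strictly; this is not strictly needed above, but it reassures us that the $\min$ can, and often does, exceed $\lceil\phi_4 n^2\rceil$, which is exactly what makes the stated dichotomy non-vacuous.
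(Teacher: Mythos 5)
Your proposal is correct and follows essentially the same route as the paper: both read the statement off Lemma~\ref{dn4}, combine the lower bound $d_{n,4}\geq\lceil\phi_4 n^2\rceil$ with the observation that the minimum is of a perfect square and a multiple of $n$, and check when each term attains the ceiling. The only cosmetic difference is that you make explicit the identity $(1-\phi_4)^2=\phi_4$ underlying the lower bound for both terms, which the paper's phrase ``by Lemma~\ref{dn4}, $d_{n,4}\geq jn=m$'' uses implicitly.
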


\begin{proof} Let $m=\bigl\lceil\phi_4 n^2\bigr\rceil$. From Lemma~\ref{dn4}
it follows that $d_{n,4}$ is always either divisible by $n$ or is a
square. Thus the equality $m=d_{n,4}$ can only hold if either $m$ is
divisible by $n$ or $m$ is a square.

If $m$ is divisible by $n$, we can write $m=nj$ for some $j\in\N$.
Now it is easy to see that $j=\bigl\lceil \frac{3-\sqrt
5}{2}n\bigr\rceil$ and therefore, by Lemma~\ref{dn4}, $d_{n,4}\geq
jn=m$. On the other hand, choosing $a=j$ and using (\ref{dn4eq}), we
get $d_{n,4}\leq \max\{nj,(n-j)^2\}=nj$. Thus $d_{n,4}=nj=m$.

If $m$ is a square, we can write $m=j^2$ for some $j\in\N$. Now it
is easy to see that $j=\bigl\lceil \frac{\sqrt 5-1}{2}n\bigr\rceil$
and therefore, by Lemma~\ref{dn4}, $d_{n,4}\geq j^2=m$. On the other
hand, choosing $a=n-j$ and using (\ref{dn4eq}), we get $d_{n,4}\leq
\max\{n(n-j),j^2\}=j^2$. Thus $d_{n,4}=j^2=m$.
\end{proof}

\begin{proof}[Proof of the first part of Theorem~{\rm\ref{k45}}]
Let $F_0,F_1,\dots$ be the Fibonacci sequence and
$\phi=\frac{\sqrt5+1}2$ be the golden ratio number. Using the
formula $F_n=\frac{\phi^n-(-\phi)^{-n}}{\sqrt 5}$ together with the
equality $\phi_4=\phi^{-2}$, one can easily verify that $\bigl\lceil
\phi_4 F_k^2\bigr\rceil=F_{k-1}^2$ if $k$ is odd and $\bigl\lceil
\phi_4 F_k^2\bigr\rceil=F_kF_{k-2}$ if $k$ is even. Thus if $n$ is a
Fibonacci number, then $\bigl\lceil \phi_4 n^2\bigr\rceil$ is either
divisible by $n$ or is a square.

To show the converse, we use the following criterion of recognizing
the Fibonacci numbers due to M\"obius \cite{mob}. It says that a
positive integer $n$ is a Fibonacci number if and only if the
interval $(\phi n-n^{-1},\phi n+n^{-1})$ contains an integer.
Furthermore, if $m$ is an integer belonging to $(\phi n-n^{-1},\phi
n+n^{-1})$, then $m$ is the next Fibonacci number after $n$.

First, assume that $n\in\N$ and $\bigl\lceil \phi_4 n^2\bigr\rceil$
is divisible by $n$. Then $\phi_4 n^2+\theta=nk$, where $k\in\N$ and
$0<\theta<1$. Since $\phi_4=2-\phi$, it follows that $\phi
n-(2n-k)=\frac{\theta}n$ and therefore $2n-k\in (\phi n-n^{-1},\phi
n+n^{-1})$. By the criterion of M\"obius, $n$ is a Fibonacci number.
Finally, assume that $\bigl\lceil \phi_4 n^2\bigr\rceil$ is a square
number. Since $\phi_4=\phi^{-2}$, this means that
$\frac{n^2}{\phi^2}+\theta=k^2$, where $k\in\N$ and $0<\theta<1$. It
immediately follows that $k=\bigl\lceil \frac{n}{\phi}\bigr\rceil$.
In other words $k=\frac{n}{\phi}+\alpha$ with $0<\alpha<1$. Squaring
the last equality, we get
$k^2=\frac{n^2}{\phi^2}+\theta=\frac{n^2}{\phi^2}+\frac{2n\alpha}{\phi}+\alpha^2$.
In particular, $\frac{2n\alpha}{\phi}<\theta<1$. Hence
$\phi\alpha<\frac{\phi^2}{2n}$. Thus the equality
$k=\frac{n}{\phi}+\alpha$ implies $n=\phi k-\phi\alpha$ and
$$
\phi\alpha<\frac{\phi^2}{2n}=\frac{\phi^2}{2(\phi k-\phi\alpha)}<
\frac{\phi^2}{2(\phi k-\phi^2/2n)}.
$$
Since $n\geq k$, we have
$$
\phi\alpha<\frac{\phi^2}{2(\phi k-\phi^2/2k)}<\frac1k,
$$
where the last inequality is satisfied for $k>2$. Now the above
display and the equality $n=\phi k-\phi\alpha$ imply that $n$
belongs to the interval $(\phi k-k^{-1},\phi k+k^{-1})$. By the
criterion of M\"obius, both $k$ and $n$ are Fibonacci numbers provided $k>2$.
If $k=1$ or $k=2$, a direct computation yields $n=2$ or $n=3$ respectively, which
are Fibonacci numbers as well.

Thus we have proven that $\bigl\lceil \phi_4 n^2\bigr\rceil$ is
either divisible by $n$ or is a square number precisely when $n$ is
a Fibonacci number. By Lemma~\ref{ttt}, $d_{n,4}=\bigl\lceil \phi_4
n^2\bigr\rceil$ if and only if $n$ is a Fibonacci number.
\end{proof}

 \section{5-Step nilpotency}

In this section we prove the second part of Theorem~\ref{k45}. As in
the previous section we start by simplification the formula defining
$d_{n,5}$.

\begin{lemma}\label{D5} If $n\in\N$ is even, then $d_{n,5}=\frac n2\bigl\lceil
\frac{2n}{3}\bigr\rceil$. If $n\in\N$ is congruent to $-1$ modulo
$6$, then $d_{n,5}=n\bigl\lceil\frac{n(n+1)}{3n+1}\bigr\rceil$. If
$n\in\N$ is congruent to $1$ or to $3$ modulo $6$, then
$d_{n,5}=\frac{n+1}{2} \bigl\lceil\frac{2n^2}{3n+1}\bigr\rceil$.
\end{lemma}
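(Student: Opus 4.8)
The strategy follows that of Lemma~\ref{dn4}: expand the definition, reduce the four-parameter minimization to a one-parameter one, pin down the optimal parameter by a monotonicity argument, and postpone the case split to the very end. First I would write $n=a_1+a_2+a_3+a_4$ and set $a=a_1$, $s=a_1+a_2$, $c=a_4$, so that (\ref{dnk}) with $k=5$ becomes
$$
d_{n,5}=\min\{\max\{an,\ s(n-a),\ (n-c)(n-s),\ cn\}:a,c,s\in\Z_+,\ a\leq s\leq n,\ c\leq n-s\}.
$$
The first two entries depend only on $(a,s)$ and the last two only on $(c,n-s)$. Hence, putting $\gamma(t)=\min\{\max\{qn,(n-q)t\}:q\in\Z_+,\ q\leq n\}$ for $0\leq t\leq n$ — which equals the minimum over $0\leq q\leq t$, since the real minimizer $\frac{nt}{n+t}$ of $q\mapsto\max\{qn,(n-q)t\}$ already lies in $[0,t]$ — one gets $d_{n,5}=\min_{0\leq s\leq n}\max\{\gamma(s),\gamma(n-s)\}$.

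The crucial observation is that $\gamma$ is non-decreasing on $\{0,1,\dots,n\}$, being a pointwise minimum, over the fixed index set $\{0,\dots,n\}$, of the quantities $\max\{qn,(n-q)t\}$, each non-decreasing in $t$ because $n-q\geq0$. Since for every $s$ at least one of $s$, $n-s$ is $\geq\lceil n/2\rceil$, monotonicity gives $\max\{\gamma(s),\gamma(n-s)\}\geq\gamma(\lceil n/2\rceil)$ with equality at $s=\lfloor n/2\rfloor$; therefore $d_{n,5}=\gamma(\lceil n/2\rceil)$. To make $\gamma$ explicit, note that the minimum over the integers of $q\mapsto\max\{qn,(n-q)t\}$ is attained at $\lfloor q_0\rfloor$ or $\lceil q_0\rceil$, where $q_0=\frac{nt}{n+t}$ is the real minimizer; evaluating there, and using $n-\lfloor q_0\rfloor=\lceil n-q_0\rceil=\lceil\frac{n^2}{n+t}\rceil$, yields
$$
\gamma(t)=\min\bigl\{\,n\lceil{\textstyle\frac{nt}{n+t}}\rceil,\ \ t\lceil{\textstyle\frac{n^2}{n+t}}\rceil\,\bigr\}.
$$

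For even $n=2m$ this becomes $\gamma(m)=\min\{2m\lceil\frac{2m}{3}\rceil,\ m\lceil\frac{4m}{3}\rceil\}$, and since $\lceil\frac{4m}{3}\rceil\leq2\lceil\frac{2m}{3}\rceil$ by subadditivity of $\lceil\cdot\rceil$, the second term is the smaller one, so $d_{n,5}=m\lceil\frac{4m}{3}\rceil=\frac n2\lceil\frac{2n}{3}\rceil$. For odd $n$ one has $n+\lceil n/2\rceil=\frac{3n+1}{2}$, hence $\frac{n\lceil n/2\rceil}{n+\lceil n/2\rceil}=\frac{n(n+1)}{3n+1}$ and $\frac{n^2}{n+\lceil n/2\rceil}=\frac{2n^2}{3n+1}$, so $d_{n,5}=\min\{A,B\}$ with $A=n\lceil\frac{n(n+1)}{3n+1}\rceil$ and $B=\frac{n+1}{2}\lceil\frac{2n^2}{3n+1}\rceil$.

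What remains — the main obstacle, though it is purely elementary bookkeeping — is to decide which of $A$, $B$ is smaller for each odd residue of $n$ modulo $6$. Here I would use the identities $\frac{n(n+1)}{3n+1}=\frac{3n+2}{9}-\frac{2}{9(3n+1)}$ and $\frac{2n^2}{3n+1}=\frac{6n-2}{9}+\frac{2}{9(3n+1)}$, observe that $9$ divides neither $3n+2$ nor $6n-2$ and that $\frac{2}{9(3n+1)}<\frac19$ is too small to alter a ceiling, and conclude $\lceil\frac{n(n+1)}{3n+1}\rceil=\lceil\frac{3n+2}{9}\rceil$ and $\lceil\frac{2n^2}{3n+1}\rceil=\lceil\frac{6n-2}{9}\rceil$. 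Writing $n=6j+r$ with $r\in\{1,3,5\}$ then evaluates both ceilings explicitly, and a direct comparison of the resulting products $A$, $B$ shows $B\leq A$ when $r\in\{1,3\}$ and $A\leq B$ when $r=5$; this yields $d_{n,5}=\frac{n+1}{2}\lceil\frac{2n^2}{3n+1}\rceil$ for $n\equiv1,3\pmod 6$ and $d_{n,5}=n\lceil\frac{n(n+1)}{3n+1}\rceil$ for $n\equiv-1\pmod 6$, which is the assertion.
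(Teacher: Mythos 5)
Your proof is correct, and the middle of your argument takes a genuinely different route from the paper's. The paper reduces the four-parameter minimization by an exchange argument: it first uses the reversal symmetry to assume $a_1\leq a_4$, then shows that shifting one unit from $a_2$ to $a_1$ (when $a_1<a_4$) and balancing $a_2$ against $a_3$ (when $a_1=a_4$) never increases the objective, so the optimum is attained at a configuration $(\alpha,\beta,\beta,\alpha)$ for even $n$ and $(\alpha,\beta+1,\beta,\alpha)$ for odd $n$; substituting yields exactly the one-parameter problems that in your notation are $\gamma(n/2)$ and $\gamma(\frac{n+1}{2})$. You instead observe that in the variables $(a,s,c)=(a_1,a_1+a_2,a_4)$ the four terms split into two groups depending only on $(a,s)$ and on $(c,n-s)$ respectively, apply $\min_{a,c}\max\{f(a),g(c)\}=\max\{\min_a f,\min_c g\}$, and finish with the monotonicity of $\gamma$ to pin $s=\lfloor n/2\rfloor$. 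This buys you a cleaner and more robust reduction: you avoid verifying $S(a')\leq S(a)$ for the two kinds of local moves (the fiddliest part of the paper's proof, stated there without detail), at the cost of not exhibiting the optimal partition explicitly. From that point on the two proofs coincide in substance --- both arrive at $\min\bigl\{n\bigl\lceil\frac{n(n+1)}{3n+1}\bigr\rceil,\frac{n+1}{2}\bigl\lceil\frac{2n^2}{3n+1}\bigr\rceil\bigr\}$ for odd $n$ and the analogous expression for even $n$ --- and your ceiling identities $\bigl\lceil\frac{n(n+1)}{3n+1}\bigr\rceil=\bigl\lceil\frac{3n+2}{9}\bigr\rceil$ and $\bigl\lceil\frac{2n^2}{3n+1}\bigr\rceil=\bigl\lceil\frac{6n-2}{9}\bigr\rceil$ are a tidy way to organize the mod~$6$ comparison that the paper merely asserts as a case check; I verified that your three residue cases give the stated answers.
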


\begin{proof} Using the symmetry in (\ref{dnk}) with respect to
reversing the order of $a_j$, we have
\begin{equation}\label{dn5}
\begin{array}{l}d_{n,5}=\min\{S(a):a\in\Z_+^4,\ a_1+a_2+a_3+a_4=n,\ a_1\leq
a_4\},\ \ \text{where}\\
S(a)=\max\{na_1,na_4,(a_1+a_2)(a_2+a_3+a_4),(a_1+a_2+a_3)(a_3+a_4)\}.
\end{array}
\end{equation}
It is easy to see that the minimum in (\ref{dn5}) can not be
attained when $a_2=0$ if $n>1$ (the case $n=1$ is trivial anyway).
If $a_1<a_4$ and $a_2>0$, one can easily check that $S(a')\leq
S(a)$, where $a'$ is obtained from $a$ by increasing $a_1$ by $1$
with simultaneous decreasing of $a_2$ by $1$. Similarly, if
$a_1=a_4$ and $|a_2-a_3|>1$, $S(a')\leq S(a)$, where $a'$ is
obtained from $a$ by increasing the smaller of $a_2$ and $a_3$ by
$1$ with simultaneous decreasing of the bigger one by $1$. It
follows that among $a\in\Z_+^4$ for which the minimum in (\ref{dn5})
is attained there must be at least one point satisfying $a_1=a_4$
and $|a_2-a_3|\leq 1$. Thus the minimum in (\ref{dn5}) is attained
at a point $a$ of the shape $a=(\alpha,\beta,\beta,\alpha)$ if $n$
is even and it is attained at a point $a$ of the shape
$a=(\alpha,\beta+1,\beta,\alpha)$ if $n$ is odd. Substituting this
data into (\ref{dn5}), we get
\begin{equation}\label{d5even}
d_{n,5}=\frac n2\min\{\max\{2a,n-a\}:a\in\Z_+,\ a\leq n/2\}\ \
\text{if $n$ is even}
\end{equation}
and
\begin{equation}\label{d5odd}
d_{n,5}=\min\{\max\{na,(n+1)(n-a)/2\}:a\in\Z_+,\ a\leq n/2\}\ \
\text{if $n$ is odd}.
\end{equation}
Since $\max\{2a,n-a\}=n-a$  if $3a\leq n$ and $\max\{2a,n-a\}=2a$ if
$3a\geq n$, (\ref{d5even}) implies that
$d_{n,5}=\min\bigl\{n\bigl\lceil\frac{n}3\bigr\rceil,\frac
n2\bigl\lceil\frac{2n}{3}\bigr\rceil\bigr\}=\frac
n2\bigl\lceil\frac{2n}{3}\bigr\rceil$ if $n$ is even (the two
numbers in the last minimum are equal in all cases except for the
numbers $n$ congruent to $-2$ modulo $6$ in which case the second
one is less by $1$).

Next, $\max\{na,(n+1)(n-a)/2\}=(n+1)(n-a)/2$ if $a\leq
\frac{n(n+1)}{3n+1}$ and $\max\{na,(n+1)(n-a)/2\}=na$ if $a\geq
\frac{n(n+1)}{3n+1}$. Plugging this into (\ref{d5odd}), we get
$d_{n,5}=\min\{n\bigl\lceil\frac{n(n+1)}{3n+1}\bigr\rceil,\frac{n+1}{2}
\bigl\lceil\frac{2n^2}{3n+1}\bigr\rceil\}$. Considering the cases of
$n$ being $1$, $3$ and $-1$ modulo $6$ separately, we see that
$d_{n,5}=n\bigl\lceil\frac{n(n+1)}{3n+1}\bigr\rceil$ if $n$ is
congruent to $-1$ modulo $6$ and $d_n=\frac{n+1}{2}
\bigl\lceil\frac{2n^2}{3n+1}\bigr\rceil$ ff $n\in\N$ is congruent to
$1$ or to $3$ modulo $6$.
\end{proof}

From Lemma~\ref{D5} it immediately follows that
$d_{n,5}=\frac{n^2}{3}=\phi_5 n^2$ if $6$ is a factor of $n$.
Considering the exact formula provided by Lemma~\ref{D5} and
treating the possible remainders for the division of $n$ by $6$ as
separate cases, one easily sees that $d_{n,5}-\frac{n^2}{3}\geq1$
and therefore $d_{n,5}>\lceil \phi_5n^2\rceil$ if $n$ is not
divisible by $6$ and $n\geq 3$. It is easy to verify that the
equality $d_{n,5}=\bigl\lceil\phi_5 n^2\bigr\rceil$ holds for $n=1$
and for $n=2$. This completes the Proof of Theorem~\ref{k45}.

\bigskip

We conclude by reminding that the following particular cases of the
Anick's conjecture \cite{ani1} remain unproved.

\begin{conj}\label{con1} There is a $k$-step nilpotent $\K$-algebra given by $n$
generators and $d$ quadratic relations whenever $d\geq \phi_kn^2$.
\end{conj}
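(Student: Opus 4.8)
The plan is to reduce the conjecture to a single extremal instance and then to attack the integrality gap that separates it from Theorem~\ref{asop}. The cases $k\leq 2$ (the hypothesis being vacuous for $k=1$ and forcing $d\geq n^2$ when $k=2$) and $d>n^2$ are immediate, since any algebra with $R_2=\{0\}$ is $k$-step nilpotent for every $k\geq 2$; so assume $k\geq 3$ and $\lceil\phi_kn^2\rceil\leq d\leq n^2$. Monotonicity then lets us pass from the extremal case upward: if there is a $k$-step nilpotent quadratic algebra $R_0$ with $n$ generators and $\lceil\phi_kn^2\rceil$ relations, let $V\subseteq W$ be the span of its relations inside the $n^2$-dimensional space $W$ of homogeneous degree-$2$ elements of $\K\langle X\rangle$; since $\dim V\leq\lceil\phi_kn^2\rceil\leq d\leq n^2$, choose $V\subseteq V'\subseteq W$ with $\dim V'=d$ and set $R=\K\langle X\rangle/{\tt Id}(g_1,\dots,g_d)$ for any basis $g_1,\dots,g_d$ of $V'$. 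Then $R$ is a quotient of $R_0$, so $R_k$ is a quotient of $(R_0)_k=\{0\}$, whence $R$ is $k$-step nilpotent with $n$ generators and $d$ relations. Thus everything comes down to realising $d=\lceil\phi_kn^2\rceil$, and by Theorem~\ref{asop} and Lemma~\ref{dnph} we already have such an algebra whenever $d\geq d_{n,k}$, with $d_{n,k}-\lceil\phi_kn^2\rceil=O(n)$; the task is to shave this $O(n)$.

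The slack has two sources. First, the block sizes $a_j$ in $(\ref{dnk})$ must be integers, whereas the optimal real proportions $\alpha_j$ of Lemma~\ref{alpk} realise exactly $\phi_kn^2$; rounding $n\alpha_j$ to the nearest integer costs $O(n)$, as the proof of Lemma~\ref{dnph} shows. Second, the chain covering produced by the Dilworth theorem uses $w(M,\prec)=\max_q|B_q|$ relations, and by Theorem~\ref{k45} this maximum already exceeds the balanced value $\phi_kn^2$ for all but sporadic $n$. The first and hardest step of the attack is therefore to prove a strengthening of Theorem~\ref{chin} in which the supports $\supp(f_\alpha)$ need not be $\prec$-chains: inspecting the proof, the chain hypothesis is used only to split $\supp(f_\alpha)\setminus\{(w_{l-1},x_l)\}$ into a $\prec$-smaller part (which the minimality of $j$ forces into $I$, since their second coordinates lie in blocks of smaller index) and a $\prec$-larger part; any relaxation — allowing, say, ``$\prec$-convex'' supports, chains in a coarser auxiliary order, or a bounded number of generators lying in two consecutive blocks $A_j,A_{j+1}$ at the cost of enlarging the index set $\N_k^{k+1}$ that tracks them — that keeps this dichotomy intact would permit covering $M$ with close to $\lceil\phi_kn^2\rceil$ relations and would absorb both rounding losses.

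Granting such an extremal example over $\overline\K$, a genericity argument finishes the proof uniformly. For a fixed $d$, the vanishing of the relations forces $R_k=\{0\}$ precisely when a certain linear map — from $\bigoplus_{i+j=k-2}(\K\langle X\rangle)_i\otimes\K^d\otimes(\K\langle X\rangle)_j$ to $(\K\langle X\rangle)_k$, with matrix depending polynomially on the coefficients of the $f_l$ — is surjective, an open condition; hence the set of coefficient tuples $(f_1,\dots,f_d)$ with $R_k=\{0\}$ is a Zariski-open subset of the space $\K^{dn^2}$ parametrising them, and its nonemptiness over $\overline\K$ yields a $\K$-point whenever $\K$ is infinite. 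For finite $\K$ one instead requires, exactly as in the proof of Theorem~\ref{asop}, that the strengthened construction output relations that are monomials or short explicit binomials with coefficients in the prime field, so that the same example works over every $\K$.

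The main obstacle is unambiguously the first step, and it is why the conjecture remains open: there is no ``fractional generator'', and Theorem~\ref{k45} shows that the chain-covering strategy of Theorem~\ref{asop} is genuinely tight only for Fibonacci $n$ when $k=4$ and for $n$ divisible by $6$ or $n\leq 2$ when $k=5$, so hitting $\lceil\phi_kn^2\rceil$ for all $n$ demands a new combinatorial mechanism rather than sharper bookkeeping of chains. The seemingly natural alternative — degenerate a known $k$-step nilpotent algebra with $d_{n,k}$ relations into one with fewer relations — is a dead end, because deleting relations enlarges the algebra, so the required degeneration runs the wrong way and there is at present no handle on it. A successful proof will therefore have to supply either a substantially stronger version of Theorem~\ref{chin} or a homological upper bound on $\dim R_k$ for generic quadratic algebras throughout the range $d\geq\phi_kn^2$, which is Anick's conjecture in full strength on that range.
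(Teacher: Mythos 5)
This statement is not proved in the paper: it is stated explicitly as an open conjecture (a special case of Anick's conjecture), and the paper's own results only establish it in the weaker range $d\geq d_{n,k}$ (Theorem~\ref{asop}), with $d_{n,k}=\lceil\phi_kn^2\rceil$ only for exceptional $n$ (Theorem~\ref{k45}: Fibonacci $n$ when $k=4$; $n\leq2$ or $6\mid n$ when $k=5$). Your proposal does not close this gap, and to your credit it says so. The preliminary reductions you make are correct and worth recording: the cases $k\leq2$ and $d\geq n^2$ are trivial, and the monotonicity step (enlarging the span $V$ of the relations to a $d$-dimensional $V'$ produces a quotient of a $k$-step nilpotent algebra, hence a $k$-step nilpotent algebra with $d$ relations) correctly reduces the conjecture to the single extremal value $d=\lceil\phi_kn^2\rceil$ for each $(n,k)$. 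Your diagnosis of where the $O(n)$ loss in the paper's method comes from --- integrality of the block sizes $a_j$ in $(\ref{dnk})$ and the fact that the Dilworth chain cover must use $w(M,\prec)=\max_q|B_q|$ relations --- is also accurate.

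The genuine gap is the entire substance of the claim: you never produce, for general $n$, a $k$-step nilpotent quadratic algebra with $\lceil\phi_kn^2\rceil$ relations. The phrase ``Granting such an extremal example over $\overline\K$'' assumes exactly what must be proved, and the proposed relaxations of Theorem~\ref{chin} (``$\prec$-convex'' supports, coarser auxiliary orders, generators straddling two blocks) are not shown to preserve the dichotomy in the inductive step of that proof --- the argument there crucially uses that every element of $\supp(f_\alpha)\setminus\{(w_{l-1},x_l)\}$ is comparable to $(w_{l-1},x_l)$, so that the $\prec$-smaller terms die by minimality of $j$ and the $\prec$-larger terms strictly increase $m_l$; without a concrete replacement for this mechanism one cannot beat $d_{n,k}$, and Theorem~\ref{k45} shows $d_{n,k}>\lceil\phi_kn^2\rceil$ for most $n$. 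The genericity argument in your last step is sound as far as it goes (surjectivity of a linear map depending polynomially on the coefficients is Zariski-open), but openness is worthless without nonemptiness, which is again the missing construction. In short, what you have written is a correct reduction plus a research programme, not a proof; the conjecture remains open.
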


\begin{conj}\label{con2} There is a finite dimensional
$\K$-algebra given by $n$ generators and $d$ quadratic relations
whenever $d>\frac{n^2}{4}$.
\end{conj}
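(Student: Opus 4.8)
The plan is to split Conjecture~\ref{con2} into an elementary reduction to Conjecture~\ref{con1} and a genuine attack on the latter by sharpening the construction behind Theorem~\ref{asop}. For the reduction one may assume $\frac{n^2}4<d<n^2$, the case $d\ge n^2$ being trivial (take $n^2$ linearly independent quadratic relations together with $d-n^2$ further ones in their span; the resulting algebra has $R_2=\{0\}$, hence $\dim R=n+1$). Since $\phi_2=1$, $\phi_k\downarrow\frac14$ and $\frac d{n^2}\in(\frac14,1)$, there is a $k\ge3$ with $\phi_k\le\frac d{n^2}$, i.e.\ $d\ge\phi_k n^2$; Conjecture~\ref{con1} for this $k$ then produces a $k$-step nilpotent --- hence finite dimensional --- algebra with $n$ generators and exactly $d$ relations. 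Moreover a quotient of a $k$-step nilpotent algebra is again $k$-step nilpotent, so Conjecture~\ref{con1} for a fixed $k$ reduces to its extremal case $d=\lceil\phi_k n^2\rceil$, all larger $d$ being recovered by adjoining arbitrary extra relations. Thus everything comes down to exhibiting, for each $k$ and $n$, a $k$-step nilpotent quadratic algebra with $n$ generators and \emph{exactly} $\lceil\phi_k n^2\rceil$ relations.

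First I would see how far the chain construction from the proof of Theorem~\ref{asop} can be pushed. It assigns to a composition $n=a_1+\dots+a_{k-1}$ the poset $(M,\prec)$, covers it by $w(M,\prec)$ chains via Dilworth's theorem, and sums the monomials along each chain; by Lemma~\ref{esti} this uses $\max_j(a_1+\dots+a_j)(a_j+\dots+a_{k-1})$ relations, and optimising over compositions gives $d_{n,k}$. But one cannot do better inside this framework: the minimal number of chains covering a finite poset equals its width, so no choice of chain-supported relations in a fixed partition poset can cost fewer than $w(M,\prec)$, and Theorem~\ref{k45} already shows $d_{n,4}>\lceil\phi_4 n^2\rceil$ whenever $n$ is not a Fibonacci number. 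Hence the chain construction falls short of the Golod--Shafarevich value by an amount that is $O(n)$ but generically nonzero, and a new mechanism is needed. \emph{This is where the difficulty concentrates}: to save those $O(n)$ relations one must either allow the supports of the defining relations to fail to be chains --- so that the telescoping argument proving Theorem~\ref{chin} collapses and must be replaced by a subtler criterion that follows the linear-algebraic interaction of the supports, not merely their combinatorial type --- or give up monomial/chain data entirely.

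Concretely I would pursue two lines. One is a \emph{perturbed} form of Theorem~\ref{asop}: start from the explicit $k$-step nilpotent algebra with $d_{n,k}$ relations, delete $d_{n,k}-\lceil\phi_k n^2\rceil$ of them, and reintroduce generic coefficients among the survivors so that the new cancellations in $R_k$ exactly make up for the deleted relations; this requires a nilpotency test able to certify $R_k=\{0\}$ without a full chain decomposition, presumably a Gr\"obner-basis or Anick-resolution computation tailored to the partition poset. The other is to seek a construction uniform in $k$, extrapolating from the small cases --- $k=2$ is trivial, $k=3$ was handled by Anick, and $k=4,5$ are partly understood --- perhaps by an induction on $n$ that assembles the algebra on $n$ generators from smaller ones while keeping the relation count on the nose. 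In each route the bottleneck is the same, and I expect it to be the real obstacle: computing $\dim R_k$ \emph{exactly} for a quadratic algebra at the critical relation count, which the Fibonacci phenomenon of Theorem~\ref{k45} warns is number-theoretically delicate rather than merely combinatorial.
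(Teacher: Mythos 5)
There is a fundamental issue here that outweighs any assessment of the individual steps: the statement you are asked to prove is Conjecture~\ref{con2}, which the paper explicitly records as \emph{open} (``the following particular cases of the Anick's conjecture remain unproved''). The paper contains no proof of it, and your proposal does not supply one either. What you have written is a correct sequence of reductions followed by a research programme, and you are candid that the programme is not carried out. To your credit, the reductions are sound: the case $d\geq n^2$ is indeed trivial; for $\frac{n^2}{4}<d<n^2$ the existence of $k$ with $\phi_k\leq\frac{d}{n^2}$ follows from $\phi_k\downarrow\frac14$, so Conjecture~\ref{con2} does reduce to Conjecture~\ref{con1}; and since a quotient of a $k$-step nilpotent graded algebra by further quadratic relations is again $k$-step nilpotent, Conjecture~\ref{con1} for fixed $k$ reduces to the extremal count $d=\lceil\phi_k n^2\rceil$. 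Your diagnosis of why the paper's chain construction cannot close the remaining gap is also accurate: the supports being chains forces at least $w(M,\prec)=d_{n,k}$ relations by Dilworth, and Theorem~\ref{k45} shows $d_{n,k}$ genuinely exceeds $\lceil\phi_k n^2\rceil$ for most $n$ already when $k=4$.

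The gap is therefore not a flawed step but a missing one: everything is deferred to the claim that for each $k$ and $n$ there is a $k$-step nilpotent quadratic algebra with exactly $\lceil\phi_k n^2\rceil$ relations, and neither of your two proposed routes (perturbing the chain construction with generic coefficients, or an induction on $n$ uniform in $k$) is executed or even shown to be feasible. In particular, the ``perturbed'' route would require a nilpotency criterion strictly stronger than Theorem~\ref{chin}, and no candidate for such a criterion is given. As a referee I would say: this is a reasonable framing of the open problem and a correct reduction to its extremal case, but it is not a proof, and it should not be presented as an attempt at one. The honest conclusion --- which the paper itself draws --- is that the statement remains a conjecture; the paper's contribution is the asymptotic version (Theorem~\ref{asop} combined with Lemma~\ref{dnph}) together with the exact characterization of when the chain construction attains $\lceil\phi_k n^2\rceil$ for $k=4,5$ (Theorem~\ref{k45}).
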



\bigskip


{\bf Acknowledgements}

We are grateful to the Max-Planck-Institute for Mathematics in Bonn and to IHES, where  parts of this research have been done, for hospitality, support, and excellent research atmosphere.
This work is funded by the ERC grant 320974, and partially supported by the project PUT9038.
We also would like to thank a referee for valuable suggestions, which helped to improve the presentation.


\small\rm

\normalsize

\normalsize

\vskip1truecm

\scshape

\noindent  Natalia Iyudu\ \

\noindent School of Mathematics

\noindent  The University of Edinburgh

\noindent James Clerk Maxwell Building

\noindent The King's Buildings

\noindent Mayfield Road

\noindent Edinburgh

\noindent Scotland EH9 3JZ

\noindent E-mail address: \qquad {\tt niyudu@staffmail.ed.ac.uk}\ \ \

{\rm and}\ \ \

\noindent   Stanislav Shkarin

\noindent Queens's University Belfast

\noindent Pure Mathematics Research Centre

\noindent University road, Belfast, BT7 1NN, UK

\noindent E-mail address: \qquad {\tt s.shkarin@qub.ac.uk}

\end{document}